\newtheorem{theorem}{Theorem}[section]
\newtheorem{lemma}[theorem]{Lemma}
\newtheorem{prop}[theorem]{Proposition}
\newtheorem{defn}[theorem]{Definition}
\newenvironment{proof*}{\vskip 2mm\noindent {}}{\hfill $\Box$ \vskip 2mm}
\numberwithin{equation}{section}
\newcommand{\C}{{\mathbb{C}}}
\newcommand{\D}{{\mathbb{D}}}
\newcommand{\N}{{\mathbb{N}}}
\newcommand{\I}{{\mathcal{I}}}
\renewcommand{\O}{{\mathcal{O}}}
\newcommand{\eps}{\varepsilon}
\begin{document}

\title[Limit of four-pole Green functions]{Limit of Green functions and ideals, the case 
of four poles
\footnote{2010 {\it Mathematics Subject Classification} 32U35, 32A27}
}

\author{Duong Quang Hai, Pascal J. Thomas}

\address{Universit\'e de Toulouse\\ UPS, INSA, UT1, UTM \\
Institut de Math\'e\-matiques de Toulouse\\
F-31062 Toulouse, France} 
\email{quanghai@math.univ-toulouse.fr, pascal.thomas@math.univ-toulouse.fr}

\begin{abstract}
We study the limits of pluricomplex Green functions with four poles tending to
the origin in a hyperconvex domain, and the (related) limits of the ideals of
holomorphic functions vanishing on those points. 
Taking subsequences, we always assume that the directions defined by pairs
of points stabilize as they tend to $0$. We prove that in a generic
case, the limit of the Green functions is always the same, while the limits
of ideals are distinct (in contrast to the three point case).  We also 
study some exceptional cases, where only the limits of ideals are determined.
In order to do this, we establish a useful result linking the length of
the upper or lower limits of a family of ideals, and its convergence.
\end{abstract}

\keywords{pluricomplex Green function, complex Monge-Amp\`ere equation, ideals of holomorphic functions}

\thanks{This work, in a different form, is part of the first author's Ph.D. dissertation \cite{Hai}, defended at the Universit\'e Paul Sabatier, Toulouse, July 8th, 2013.}

\maketitle

\section{Introduction}
The definition of multipole pluricomplex Green functions with logarithmic singularities,
in the wake of Lempert's seminal work \cite{Lem},
was motivated by the nonlinearity of the complex Monge-Amp\`ere equation,
and generalizations of the Schwarz Lemma, see
e.g. Demailly \cite{De1},  \cite{Za},  Lelong \cite{Lel}.

Sometimes it is useful to study the limit case where poles tend to each
other \cite{Th}, an analogue of multiple zeroes for holomorphic functions, 
and this leads naturally to the more general notion of the Green
function of an ideal of holomorphic functions:
\begin{defn}\cite{Ra-Si}
\label{greenideal}
Let $\Omega$ be a hyperconvex bounded domain
in $\C^n$, $\mathcal O(\Omega)$ the space of holomorphic functions on this domain.

Let $\mathcal I$ be an ideal of $\mathcal O(\Omega)$, and $\psi_j$ its generators. Then
$$
G_{\mathcal I}^\Omega (z) := \sup \big\{ u(z) : u \in PSH_-(\Omega), 
u(z) \le \max_j \log |\psi_{j}| + O(1) \big\}.
$$
\end{defn}
Note that the condition is meaningful only near $a \in V(\mathcal I):=
\{ p \in \Omega: f(p)=0, \forall f \in \mathcal I\}$. Since the domain is pseudoconvex,
 there are finitely many global generators $\psi_j \in \mathcal O(\Omega)$ such that
for any $f\in \mathcal I$, there exists $h_j \in \mathcal O(\Omega)$ such that 
$f = \sum_j h_j \psi_j$, see e.g. \cite[Theorem 7.2.9, p. 190]{Ho}. 

In the special case when $S$ is a finite set in $\Omega$ and
$\mathcal I=\mathcal I(S)$, the ideal of all functions vanishing on the set $S$
(which we sometimes call point-based ideal),
this 
reduces to a pluricomplex Green function with logarithmic singularities;
we write $G_{\mathcal I(S)}=G_S$. 

We want to study the limit of $G_{S_\eps}$
when $S_\eps$ is a set of points tending to the origin, and relate this to the
limit of the ideals $\I(S_\eps)$. It is a consequence of \cite{Ra-Th} that 
if convergence of those Green functions takes place in the (relatively weak)
sense of $L^1_{\mbox{loc}}(\overline \Omega)$, 
then that convergence is actually uniform on compacta of $\overline \Omega \setminus \{0\}$,
so it will be understood that all convergence results are in this sense. 

The case of $3$ poles in dimension $n=2$
was worked out in
\cite[Theorem 1.12, (i)]{MRST}; a remaining subcase of that study was finally
settled in \cite{Du-Th}. 

In the present paper, we explore the case of $4$ points tending to the origin in
$\C^2$.  Unlike in the three-point case, where the limit ideal was generically
$\frak M_0^2$ and the limits of the Green functions depended on the directions
along which the points tended to $0$, 
here we will see that, generically (in a sense 
to be made precise), $\lim G_{\mathcal I_\eps} = G_{\lim \mathcal I_\eps}$,
and that this limit is the same, namely, 
$\lim G_{\mathcal I_\eps} = 2 \max (\log |z_1|, \log |z_2|)+O(1)$
(Theorem \ref{theo_ICU_chap_4}), whereas the
limit ideals very much depend on the directions of convergence to $0$. 

Some singular cases are studied in Theorems \ref{theo_02_chap_04} and
\ref{pro11122012_01}, although here we mostly compute
limits of ideals, the Green functions of which cannot coincide with the
limit of our Green functions because of Theorem \ref{thmiff} below.  The
results of \cite{Ra-Th} are used to yield some estimates of the Green
functions in those cases, but the complete answer is not known.

In order to obtain those results, we establish Theorem \ref{limsupinf},
 an auxiliary result about convergence of
ideals which shortens the proofs, and should be of independent interest. 

\section{Statement of the results}

\subsection{Notations}

As usual, $\frak M_0:= \mathcal I(\{(0,0)\})$ stands for the maximal ideal at $(0,0)$,
and $\frak M_0^2, \frak M_0^3 \dots$ for its successive powers.  For an ideal $\mathcal I \subset
\mathcal O(\Omega)$, its \emph{length} (or co-length) is 
$\ell (\mathcal I) := \dim O(\Omega)/ \dim (\mathcal I)$.  For instance, 
$\ell (\frak M_0^k) = \frac12 k(k+1)$. 

We consider $S_\varepsilon := \{a_k^\varepsilon,1\le k \le 4\} \subset \Omega$, for $\eps \in \C$,
$\mathcal I_\eps := \mathcal I(S_\eps)$.

In general we should consider $A \subset \C$ such that $0 \in \Bar{A}\backslash A$ and study
limits along $A$; quite often we will use some compactness 
to ensure convergence and pass to a subsequence
included in $A$.  For simplicity, we will just write
 $\underset{\varepsilon \rightarrow 0}{\lim}$ or $\lim_\eps$ instead of 
 $\underset{\varepsilon \rightarrow 0, \varepsilon \in A}{\lim}$.

We will write several sufficient conditions about convergence of ideals and Green functions
in terms of the asymptotic directions defined by pairs of poles:
$$v_{ij}^\varepsilon := [a_j^\varepsilon - a_i^\varepsilon] \in \mathbb{P}^1\C,
$$
where $[\cdot]$ denotes the class in $\mathbb{P}^1\C$ of an element of $\C^2 \setminus \{(0,0)\}$.
Since $\mathbb{P}^1\C$ is compact, by restricting to an appropriate subsequence
we assume
 $v_{ij} = \underset{\varepsilon \rightarrow 0}{\lim} v_{ij}^\varepsilon \in \mathbb{P}^1\C$, for $1 \leqslant i < j \leqslant 4$.  When such convergence does not occur as $\eps\to 0$ in an
 unrestricted fashion, one may consider the (possible) limits obtained from ``convergent" 
 subsequences, and conclude about global convergence by examining whether the partial limits
 coincide or not. 
 
Let
\begin{equation*}
\mathcal{D}^\varepsilon = \mathcal{D}(S_\varepsilon):= \{v_{ij}^\varepsilon \in \mathbb{P}^1\C, 1 \leqslant i < j \leqslant 4\},
\quad
\mathcal{D} := \{v_{ij} \in \mathbb{P}^1\C, 1 \leqslant i < j \leqslant 4\}.
\end{equation*}

Given a subset $\tilde S_\eps \subset S_\eps$, we can define $\tilde{\mathcal D}_\eps$ and
$\tilde{\mathcal D}$ in a similar manner.

\subsection{The generic $4$-pole case}

\begin{theorem}\label{theo_ICU_chap_4} 
Let $S_\varepsilon$ satisfy
\begin{equation}\label{condition21112011}
\forall \tilde S_\eps \subset S_\eps \mbox{ with } \#\tilde S_\eps=3\mbox{, then }
\#\Tilde{\mathcal{D}} \geqslant 2.
\end{equation}
and 
\begin{equation}\label{condition20092012}
\forall k \in \{1, 2, 3, 4\}, \quad 
\#\big\{v_{km} \in \mathbb{P}^1\C : m \in \{1, 2, 3, 4\} \backslash \{k\}\big\} \geqslant 2,
\end{equation}
then there exists $\lim_\eps \mathcal{I}_\varepsilon = \mathcal I$, with 
$\mathfrak{M}_0^3 \subset \mathcal I \subset \mathfrak{M}_0^2$ and $ \ell(\mathcal I) = 4$; and
$\lim_\eps G_\varepsilon = G_J = 2  \max (\log |z_1|, \log |z_2|)+O(1)$
depends only on $\Omega$ and not on $\mathcal I$.
\end{theorem}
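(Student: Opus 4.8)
The plan is to realize the ideals $\mathcal{I}_\eps = \mathcal{I}(S_\eps)$, for small $\eps$, as complete intersections of two conics, and to read off everything from the limiting pencil of conics. For each $\eps$ the four distinct poles are not collinear (by \eqref{condition21112011}), hence impose independent conditions on the six-dimensional space of affine conics, leaving a $2$-dimensional space; choose a basis $q_1^\eps, q_2^\eps$. Passing to a subsequence, the corresponding point of the Grassmannian converges, so $q_i^\eps \to q_i$, and the first task is to show that the limit space $\langle q_1, q_2 \rangle$ consists of homogeneous quadratic forms with no common linear factor. That $q_1, q_2$ are homogeneous (i.e.\ $\mathcal{I} \subset \mathfrak{M}_0^2$, no linear form survives) will follow because a surviving linear form would force the four poles to be asymptotically collinear through $0$, again contradicting \eqref{condition21112011}.

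The geometric crux, and the step I expect to be the main structural obstacle, is that $q_1, q_2$ have no common linear factor, equivalently $\{q_1 = q_2 = 0\} = \{0\}$. There are exactly two degenerations producing a common factor, and the two hypotheses are tailored to exclude them: if three poles were limit-collinear on a line $L$ through $0$, then every conic through them would contain $L$ (a conic meeting a line three times contains it), which is excluded by \eqref{condition21112011}; and if, from one pole, the other three approached in a single direction $v$, a direct asymptotic computation on the coefficients of the pencil shows that every limit conic acquires the linear factor cutting out the line through $0$ in direction $v$, which is excluded by \eqref{condition20092012}. Verifying that these are the \emph{only} obstructions, by analysing the asymptotics of the pencil as $\eps \to 0$, is the delicate combinatorial heart of the argument.

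Granting no common factor, $\langle q_1, q_2 \rangle$ is an $\mathfrak{M}_0$-primary complete intersection of two quadrics, hence Gorenstein with Hilbert function $(1,2,1)$; thus $\mathfrak{M}_0^3 \subset \langle q_1, q_2 \rangle \subset \mathfrak{M}_0^2$ and $\ell(\langle q_1, q_2 \rangle) = 4$. Since $\langle q_1, q_2 \rangle \subseteq \lim_\eps \mathcal{I}_\eps$ while $\ell(\mathcal{I}_\eps) = 4$ for every $\eps$, Theorem \ref{limsupinf} lets me deduce from this coincidence of lengths that the family converges, with $\lim_\eps \mathcal{I}_\eps = \langle q_1, q_2 \rangle =: \mathcal{I}$; this yields the asserted structure, and the dependence on the directions is exactly the dependence of the omitted quadric on $\mathcal{D}$.

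For the Green functions, no common factor also implies (for small $\eps$, since the leading forms of $q_i^\eps$ converge to $q_1, q_2$ and so have no common zero at infinity) that $q_1^\eps, q_2^\eps$ meet in exactly the four affine points $S_\eps$, so $\mathcal{I}_\eps = \langle q_1^\eps, q_2^\eps \rangle$ and $\Phi_\eps := (q_1^\eps, q_2^\eps)$ is, on a fixed neighbourhood of $0$, a proper degree-$4$ branched cover with $\Phi_\eps^{-1}(0) = S_\eps$. The behaviour of Green functions under such proper maps (the estimates of \cite{Ra-Th}) gives $G_\eps = \max(\log|q_1^\eps|, \log|q_2^\eps|) + O(1)$, and letting $\eps \to 0$, with the {\L}ojasiewicz inequality $c\|z\|^2 \le \max(|q_1|, |q_2|) \le C\|z\|^2$ for the isolated-zero pair, gives $\lim_\eps G_\eps = 2\log\|z\| + O(1) = 2\max(\log|z_1|, \log|z_2|) + O(1)$. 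The same two bounds ($\mathcal{I} \subset \mathfrak{M}_0^2$ from above, the competitor $\max(\log|q_1|, \log|q_2|)$ from below) show $G_{\mathcal{I}} = 2\max(\log|z_1|, \log|z_2|) + O(1)$ as well, so the limit of the Green functions coincides with the Green function of the limit ideal and is independent of $\mathcal{I}$. The main analytic obstacle is making the estimate $G_\eps = \max(\log|q_1^\eps|, \log|q_2^\eps|) + O(1)$ hold \emph{uniformly} in $\eps$: this is precisely where the results of \cite{Ra-Th} are essential, and where the generic case departs from the singular ones.
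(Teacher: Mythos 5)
Your overall strategy is the paper's own: realize $\mathcal{I}_\eps$ as a complete intersection of two conics through the four poles, show that a suitable pair of limit conics consists of homogeneous quadratics with no common linear factor, then feed this into the uniform complete intersection machinery (Theorem \ref{thmgci}) for the Green functions and into a length count plus Theorem \ref{limsupinf} for the ideals. The endgame is sound — your Gorenstein/Hilbert function argument $(1,2,1)$ giving $\mathfrak{M}_0^3\subset\langle q_1,q_2\rangle\subset\mathfrak{M}_0^2$ and $\ell=4$ is in fact a cleaner alternative to the paper's explicit cubics in Proposition \ref{pro1201}, and the $2\log\|z\|+O(1)$ asymptotics plus the Rashkovskii--Sigurdsson comparison principle handle the independence of the limit Green function from $\mathcal I$ exactly as in the paper.

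The genuine gap is the step you yourself defer as ``the delicate combinatorial heart'': the assertion that the only degenerations producing a common linear factor in the limit pencil are (a) a limit-collinear triple and (b) all directions from one pole coinciding, so that \eqref{condition21112011} and \eqref{condition20092012} suffice. This is not a formal consequence of the hypotheses, and proving it is the bulk of the paper's argument. Concretely, span the pencil by the line-pair conics $f_1^\eps=l_{12}^\eps l_{34}^\eps$, $f_2^\eps=l_{13}^\eps l_{24}^\eps$, $f_3^\eps=l_{14}^\eps l_{23}^\eps$; a pair of limits $(f_k,f_{k'})$ has a common factor iff the corresponding sets of limit directions meet (all limit lines pass through the origin). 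The hypotheses permit a triple with exactly two limit directions, say $v_{23}=v_{12}\neq v_{13}$, and one must then exclude, for instance, the simultaneous occurrence of $v_{34}=v_{13}$ and $v_{14}=v_{24}$ — a configuration that violates no single instance of \eqref{condition21112011} or \eqref{condition20092012} on its face. The paper disposes of it only by normalizing $v_{12}=[1:0]$, $v_{13}=[0:1]$, deriving the rate comparison $\rho_3/\rho_2\to 0$, and playing $\alpha/\rho_2$ against $\beta/\alpha$ to reach a contradiction with $v_{34}=[0:1]$ (Cases 2.1 and 2.2, plus the symmetric case). Without this asymptotic case analysis your proof does not close; the hypotheses are exactly the necessary and sufficient conditions for an independent pair to exist, but the sufficiency is where all the work lies. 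A minor secondary point: your argument that a surviving linear part forces asymptotic collinearity needs a uniform implicit function theorem for $q^\eps\to q$ to transfer chord directions to the tangent of $\{q=0\}$ at $0$, but that part is routine.
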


\subsection{Some singular cases}

We will see how things change when we give up the second condition in
Theorem \ref{theo_ICU_chap_4}.

\begin{theorem}\label{theo_02_chap_04} Suppose that
 $S_\varepsilon$ verifies condition \eqref{condition21112011},
 and
\begin{equation}\label{eq_15112012_02}
\exists i \in I := \{1, 2, 3, 4\} \mbox{ s.t. }
\# \big\{v_{ij} \in \mathbb{C}\mathbb{P}^1 : j \in I \backslash \{i\} \big\} = 1,
\end{equation}
then, after a linear change of variables,
 $\underset{\varepsilon \to 0}{\lim}\; \mathcal{I}\big(S_\varepsilon\big) = \mathcal{I}_0 := \left\langle z_1z_2, z_2^2, z_1^3\right\rangle$, 
and 
\begin{equation*}
\liminf_\eps G_\eps \ge
G_{\mathcal{I}_0}(z) = \max\big\{\log |z_1z_2|, 2\log |z_2|, 3\log |z_1|\big\} + O(1),
\end{equation*}
but there is no equality.
\end{theorem}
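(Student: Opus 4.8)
\begin{proof*}
\emph{Proof plan.} The plan is to put the configuration in a normal form, to identify the limit ideal by an explicit finite-order interpolation carried out at two scales, and then to deduce the two Green-function assertions from a comparison of Monge--Amp\`ere masses. By \eqref{eq_15112012_02} relabel the poles so that $i=1$ and $v_{12}=v_{13}=v_{14}=:v$; a fixed linear change of coordinates carries $v$ to $[1:0]$, so that from $a_1^\eps$ the three other poles are seen asymptotically along the $z_1$-axis. Condition \eqref{condition21112011} applied to the triples containing the index $1$ forces every direction $v_{jk}$ with $j,k\in\{2,3,4\}$ to differ from $[1:0]$, and applied to $\{2,3,4\}$ it yields at least two distinct directions among $v_{23},v_{24},v_{34}$. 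Thus $S_\eps$ has a genuine two-scale shape: a longitudinal extent of order $|a_j^\eps-a_1^\eps|$ along $z_1$, together with a nondegenerate transverse triangle $\{a_2^\eps,a_3^\eps,a_4^\eps\}$ at a strictly finer scale. I will (i) show $\mathcal I_0\subseteq\liminf_\eps\mathcal I_\eps$ by producing explicit members of $\mathcal I_\eps=\mathcal I(S_\eps)$ converging to each generator of $\mathcal I_0$; (ii) promote this to $\lim_\eps\mathcal I_\eps=\mathcal I_0$ by a colength argument; and (iii) read off the Green-function statements.

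For (i), the degree $\le 2$ part of $\mathcal I_0$ is $\langle z_1z_2,z_2^2\rangle$, while the four interpolation conditions cut out a two-dimensional pencil of conics through $S_\eps$. The first task is to check that, after extracting leading terms at the two scales above, this pencil converges exactly to $\langle z_1z_2,z_2^2\rangle$; in particular it must contain no conic tending to $z_1^2$, and the alignment of the three poles in the $z_1$-direction from $a_1^\eps$ is precisely what prevents a quadric with surviving $z_1^2$ leading term from vanishing on all four poles. For the third generator I would construct a cubic $P_\eps\in\mathcal I_\eps$ with $P_\eps\to z_1^3$: the six-dimensional space of cubics through $S_\eps$ leaves ample room, and one solves the vanishing conditions scale by scale so that the lower-order corrections stay negligible and the $z_1^3$ leading term survives in the limit. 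Granting these approximants, $z_1z_2,z_2^2,z_1^3\in\liminf_\eps\mathcal I_\eps$, hence $\mathcal I_0\subseteq\liminf_\eps\mathcal I_\eps$.

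Since the classes of $1,z_1,z_2,z_1^2$ span $\O(\Om)/\mathcal I_0$ we have $\ell(\mathcal I_0)=4$, while $\ell(\mathcal I_\eps)=4$ for every $\eps$ (four distinct poles). These colength data, together with the inclusion just proved, meet the hypotheses of Theorem \ref{limsupinf}, which then forces $\limsup_\eps\mathcal I_\eps=\mathcal I_0$, so that $\liminf_\eps\mathcal I_\eps=\limsup_\eps\mathcal I_\eps=\mathcal I_0$ and $\lim_\eps\mathcal I(S_\eps)=\mathcal I_0$. For the lower bound on the Green functions, let $\psi_{j,\eps}\in\mathcal I_\eps$ be the interpolants from (i), so that $\psi_{j,\eps}\to\psi_j$ locally uniformly as $\psi_j$ ranges over $z_1z_2,z_2^2,z_1^3$. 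Each $\psi_{j,\eps}$ vanishes on $S_\eps$, so near each pole $\max_j\log|\psi_{j,\eps}|\le\log|z-a_k^\eps|+O(1)$; after subtracting $\sup_\Om\max_j\log|\psi_{j,\eps}|$ the function $u_\eps:=\max_j\log|\psi_{j,\eps}|+c_\eps$ is an admissible negative competitor in the definition of $G_\eps=G_{S_\eps}$, whence $u_\eps\le G_\eps$. Letting $\eps\to0$ (the normalizing constants $c_\eps$ converge) gives $u_\eps\to\max_j\log|\psi_j|+c$, and therefore $\liminf_\eps G_\eps\ge\max\{\log|z_1z_2|,2\log|z_2|,3\log|z_1|\}+O(1)=G_{\mathcal I_0}$.

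Finally, strictness is a Monge--Amp\`ere mass count, and is exactly the obstruction encoded by Theorem \ref{thmiff}. For every $\eps$ the total mass $\int_\Om(dd^cG_\eps)^2$ equals the number of poles, namely $4$, whereas the Hilbert--Samuel multiplicity of $\mathcal I_0$, read off from the Newton polygon with boundary vertices $(0,2),(1,1),(3,0)$, is $e(\mathcal I_0)=5$ (twice the area $\tfrac52$ of the region under that boundary), which equals $\int_\Om(dd^cG_{\mathcal I_0})^2$. Were $\liminf_\eps G_\eps=G_{\mathcal I_0}$, then passing to a convergent subsequence and invoking the mass continuity of \cite{Ra-Th} would give $5=\int_\Om(dd^cG_{\mathcal I_0})^2=\lim_\eps\int_\Om(dd^cG_\eps)^2=4$, which is absurd; hence the inequality is strict on a set of positive measure. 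I expect steps (i)--(ii) --- pinning down the limit ideal through the two-scale interpolation, and in particular ruling out a $z_1^2$ limit while producing the $z_1^3$ one --- to be the main obstacle, the Green-function assertions then following formally from the ideal convergence and the mass defect $5\neq4$.
\end{proof*}
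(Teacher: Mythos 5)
Your skeleton agrees with the paper's: put the degenerate vertex in normal form, show $\mathcal I_0\subseteq\liminf_\eps\mathcal I_\eps$, and conclude $\lim_\eps\mathcal I_\eps=\mathcal I_0$ from $\ell(\mathcal I_0)=4=\ell(\mathcal I_\eps)$ via Theorem \ref{limsupinf}; your reading of the hypotheses (all $v_{jk}$, $j,k\in\{2,3,4\}$, differ from the common direction, and at least two of them are distinct) is also correct. But the heart of the proof --- actually exhibiting elements of $\mathcal I(S_\eps)$ converging to $z_1z_2$, $z_2^2$ and $z_1^3$ --- is never carried out: you write ``I would construct'', ``one solves the vanishing conditions scale by scale'', ``granting these approximants''. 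This is exactly where the content lies. The paper does it with products of the lines $l_{ij}^\eps$ through pairs of poles: one pairing gives $z_1z_2$; $z_2^2$ is \emph{not} obtained from a single pairing but from a linear combination $l_{ij}^\eps l_{km}^\eps+t\,l_{13}^\eps l_{24}^\eps$, which requires first proving the combinatorial claim that a third limit direction $[1:t]$, $t\neq0,\infty$, exists (in your normalization, that two of $v_{23},v_{24},v_{34}$ are distinct). The cubic is also not automatic in your normalization: every line through $a_1^\eps$ and another pole degenerates to $z_2$, and $\prod_k(z_1-\pi_1(a_k^\eps))$ has degree $4$ when the first coordinates are distinct, so one must, e.g., take $z_1\,(z_1-\pi_1(a_2^\eps))\,l_{34}^\eps\to\alpha z_1^3+\beta z_1^2z_2$ and subtract an element tending to $z_1^2z_2$. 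None of this is hard, but none of it is in your text, so the inclusion $\mathcal I_0\subseteq\liminf_\eps\mathcal I_\eps$ is asserted rather than proved.

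The second gap is in your strictness argument. Your multiplicity count $e(\mathcal I_0)=5\neq 4=\ell(\mathcal I_0)$ is correct and is indeed the mechanism behind Theorem \ref{thmiff}, but the contradiction you draw does not close. From $\liminf_\eps G_\eps=G_{\mathcal I_0}$ you can only conclude that an $L^1_{\rm loc}$-convergent subsequence has limit $g\ge G_{\mathcal I_0}$, not $g=G_{\mathcal I_0}$; Demailly's comparison then gives $\int(dd^cg)^2\le\int(dd^cG_{\mathcal I_0})^2$, i.e.\ $4\le 5$, which is no contradiction --- the inequality goes the harmless way. Turning the mass defect into strict inequality of the functions requires the ``only if'' direction of Theorem \ref{thmiff} (that locally uniform convergence \emph{to} $G_{\mathcal I_0}$ would preserve the total mass), which is precisely what the paper invokes, and which cannot be replaced by the one-sided comparison you use. (A smaller issue of the same kind: your competitor argument yields $\liminf_\eps G_\eps\ge\max_j\log|\psi_j|+c$, which controls $G_{\mathcal I_0}$ only near the origin and up to an additive constant, not the global inequality $\liminf_\eps G_\eps\ge G_{\mathcal I_0}$ as displayed.)
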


If the situation becomes even more singular, we can have more diverse limits for the
ideals.

\begin{theorem}\label{pro11122012_01} 
Suppose there exist a $3$ point subset $\Tilde{S}_\varepsilon \subset S_\varepsilon$ 
such that  $\# \Tilde{\mathcal{D}} = 1$.
Then
\begin{enumerate}
\item
If $\# \mathcal{D} \geqslant 3$,  then, after an appropriate linear change of variables,
$\underset{\varepsilon \to 0}{\lim}\; \mathcal{I}\big(S_\varepsilon\big) = 
\mathcal{I}_0$.
\item
If $\# \mathcal{D} = 2$, then, after passing to a subsequence and
an appropriate linear change of variables,
$\underset{\varepsilon \to 0}{\lim}\; \mathcal{I}\big(S_\varepsilon\big) = 
\mathcal{I}_0$ or $=\mathcal{J}_0:=  \left\langle z_1z_2, z_1^2 + k z_2^2, z_1^3\right\rangle$,
for some $k \in \C \setminus \{0\}$.
\end{enumerate}
\end{theorem}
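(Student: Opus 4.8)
The plan is to reduce everything to the degree-two part of $\I_\eps$, which for four points is controlled by a single linear functional, and then to close up with one cubic generator and the length criterion of Theorem~\ref{limsupinf}. Write $a_k^\eps=(x_k^\eps,y_k^\eps)$ and, for a quadratic form $Q$, set
\begin{equation*}
\Lambda_\eps(Q):=\det\begin{pmatrix} 1 & 1 & 1 & 1\\ x_1^\eps & x_2^\eps & x_3^\eps & x_4^\eps\\ y_1^\eps & y_2^\eps & y_3^\eps & y_4^\eps\\ Q(a_1^\eps) & Q(a_2^\eps) & Q(a_3^\eps) & Q(a_4^\eps)\end{pmatrix}.
\end{equation*}
Expanding along the last row gives $\Lambda_\eps(Q)=\sum_k\la_k^\eps Q(a_k^\eps)$, where $\la_k^\eps$ is $\pm$ the area of the triangle on the three poles other than $a_k^\eps$; the relations $\sum_k\la_k^\eps=\sum_k\la_k^\eps a_k^\eps=0$ say that $\Lambda_\eps$ kills affine forms. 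A form $Q$ is the quadratic part of some $f\in\I_\eps$ of degree $\le 2$ iff $\Lambda_\eps(Q)=0$: one then solves for an affine correction $L_\eps$ with $f=Q+L_\eps$ vanishing at the four poles, and since the poles tend to $0$ one checks $L_\eps\to 0$. Hence, along a subsequence making $[\Lambda_\eps]$ converge in $\P\big((\Sym^2\C^2)^\ast\big)$, the degree-two part of $\liminf_\eps\I_\eps$ contains the two-dimensional space $W:=\ker\big(\lim_\eps[\Lambda_\eps]\big)$. Normalizing the collinear triple $\{a_1,a_2,a_3\}$ to direction $[1:0]$, the linchpin is that $z_1z_2\in W$: rewriting $\Lambda_\eps(z_1z_2)=\sum_k\la_k^\eps(x_k^\eps-x_1^\eps)(y_k^\eps-y_1^\eps)$, each term is suppressed (collinearity gives $y_k^\eps-y_1^\eps=o(x_k^\eps-x_1^\eps)$ for $k\le 3$, while $\la_4^\eps$ is the area of the nearly degenerate triangle $a_1a_2a_3$), so $\Lambda_\eps(z_1z_2)$ is of lower order than $\Lambda_\eps(z_1^2)$ and $\lim_\eps[\Lambda_\eps]$ is diagonal.

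For part (1) I would avoid the fine asymptotics and produce $W$ by hand. Any partition of the four poles into two pairs yields, as $\eps\to 0$, a product of two lines whose leading quadratic form is $z_2\cdot\ell$, with $\ell$ the linear form vanishing in the limiting direction of the second pair. Since $\#\mathcal D\ge 3$, among $v_{14},v_{24},v_{34}$ there are two directions giving linearly independent $\ell$'s, so these products span $z_2\cdot\langle z_1,z_2\rangle=\langle z_1z_2,z_2^2\rangle$; thus $W=\langle z_1z_2,z_2^2\rangle$ lies entirely in $\liminf_\eps\I_\eps$. The collinear triple furnishes in addition the cubic $\prod_{i=1}^3(z_1-x_i^\eps)$, corrected by a multiple of one of the above quadrics to vanish at $a_4^\eps$, whose limit is $z_1^3$. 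Therefore $\I_0=\langle z_1z_2,z_2^2,z_1^3\rangle\subset\liminf_\eps\I_\eps$, and since $\ell(\I_0)=4=\ell(\I_\eps)$, Theorem~\ref{limsupinf} upgrades this containment to honest convergence $\lim_\eps\I_\eps=\I_0$.

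For part (2) there are only two limiting directions; a linear change sends them to the two axes, so that the directions $v_{i4}$ are all $[0:1]$ and the pair-line products now produce only $z_1z_2$. After passing to a subsequence, $\lim_\eps[\Lambda_\eps]=\mathrm{diag}(p,q)$ by the first paragraph, whence $W=\langle z_1z_2,\;q\,z_1^2-p\,z_2^2\rangle$. If this limit has rank one ($p=0$ or $q=0$) then $\langle W\rangle=\langle z_1z_2,z_2^2\rangle$, and adjoining the appropriate cubic (again from the triple) gives $\I_0$, possibly after exchanging the axes. If it has rank two then $k:=-p/q\neq 0$ and $\langle W\rangle=\J_0$ already, the generator $z_1^2+kz_2^2$ being realized as the limit of the conic in the pencil $\ker\Lambda_\eps$ whose affine correction tends to $0$. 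As $\ell(\I_0)=\ell(\J_0)=4$, Theorem~\ref{limsupinf} yields convergence to $\I_0$ or $\J_0$ in the two subcases.

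The hard part is the rank-two ($\J_0$) branch of part (2). There the two generators cannot be obtained from products of lines through pairs of poles, so one must extract them from the genuine pencil $\ker\Lambda_\eps$, which forces two delicate estimates: controlling the affine corrections $L_\eps$ so that the conics truly converge to pure quadrics, and showing that $\Lambda_\eps(z_1z_2)$ is of strictly lower order than \emph{both} $\Lambda_\eps(z_1^2)$ and $\Lambda_\eps(z_2^2)$. The latter is a competition between the deviation of the triple from exact collinearity and the separated scale of the fourth pole; only along suitable subsequences does the ratio $\Lambda_\eps(z_1^2)/\Lambda_\eps(z_2^2)$ converge to a finite nonzero limit $-k$, which is precisely why a subsequence is required and why $k$ is genuinely free. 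The matching upper bound, by contrast, costs nothing: once a length-$4$ ideal is shown to sit inside $\liminf_\eps\I_\eps$, Theorem~\ref{limsupinf} forces equality.
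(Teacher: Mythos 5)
Your overall architecture is the right one and matches the paper's: exhibit a length-$4$ ideal inside $\liminf_\eps\I_\eps$ and let Theorem~\ref{limsupinf} do the rest, with the quadratic generators coming from products of lines in the generic subcases and from genuine conics through the four poles in the degenerate one. But two steps, as written, are gaps. First, the construction of $z_1^3$ fails: you propose to correct $\prod_{i=1}^3(z_1-x_i^\eps)$ ``by a multiple of one of the above quadrics to vanish at $a_4^\eps$,'' but those quadrics are elements of $\I_\eps$ and already vanish at $a_4^\eps$, so no multiple of them can alter the (generically nonzero) value of the cubic there. The paper sidesteps this by normalizing a \emph{mixed} pair to the vertical direction, say $v_{13}=[0:1]$ with $\{1,2,4\}$ the collinear triple, and taking $l_{13}^\eps(z)\,[z_1-x_2^\eps]\,[z_1-x_4^\eps]$, which vanishes at all four poles by construction and tends to $z_1^3$; if you keep your labelling you need an analogous factorized element, not an additive correction.

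Second, and more seriously, the entire content of the $\J_0$ branch is deferred. Your functional $\Lambda_\eps$ is a clean repackaging of the paper's pencil of conics (indeed, in the paper's normalization one computes $\Lambda_\eps(z_1^2):\Lambda_\eps(z_1z_2):\Lambda_\eps(z_2^2)=(\rho-\eps):\delta\rho:\delta(\delta\rho-\beta)$, recovering $k=\lim(\rho-\eps)/(\delta\beta)$), but the two facts you flag as ``delicate estimates'' --- that the affine corrections $L_\eps$ tend to $0$, and that $\Lambda_\eps(z_1z_2)$ is of strictly lower order than $\max(|\Lambda_\eps(z_1^2)|,|\Lambda_\eps(z_2^2)|)$ --- are exactly what must be proved, and your term-by-term ``suppression'' heuristic does not rule out cancellation among the signed-area coefficients $\la_k^\eps$ (nor does near-collinearity of the triple obviously keep the interpolating affine form bounded). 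The paper handles this by hard normalization: reparametrizing so that $|\eps|=\|b_2^\eps-b_1^\eps\|$ and $|\rho|\le\tfrac12|\eps|\ll|\beta|$, applying the shear $\Phi_{1,\eps}$ to put the fourth point on an axis, and then writing down explicit quadratics $P_\eps,Q_\eps,R_\eps\in\I_\eps$ whose limits are computed directly in each regime of $\delta/(\rho-\eps)$ and $(\rho-\eps)/(\delta\beta)$ (Proposition~\ref{pro01_15112012}); that case analysis is also where the subsequence and the value of $k$ genuinely enter. Relatedly, your reduction in part (2) to ``all $v_{i4}=[0:1]$'' skips the subcase where some mixed pair still has the horizontal direction, which must be split off first (it yields $z_2^2$ from a product of lines and hence $\I_0$). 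Until the normalization and the limit computations for the conic pencil are actually carried out, the $\J_0$ case is not established.
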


We suspect that the Green functions do admit a limit, but we haven't been able to determine it.

\subsection{Upper and lower limits of ideals}

We now formalize the notion of convergence of ideals using upper and lower limits.

\begin{defn}{\rm \cite{MRST}}
\label{convid}
\begin{enumerate}
\item[(i)]
$\liminf\limits_{A\ni\eps \to 0}\I_\eps$ is the ideal consisting of all $f\in \mathcal O(\Omega)$ such that $f_\eps\to f$ locally
uniformly on $\Omega$, as $\eps\to 0$, where $f_\eps \in \I_\eps$.\\
\item[(ii)]
$\limsup\limits_{A\ni\eps \to 0}\I_\eps$ is the ideal
of $\mathcal O(\Omega)$ generated by all functions $f$ such that
$f_j\to f$ locally uniformly, as $j\to \infty$, for some
sequence $\eps_j\to 0$ in $A$ and $f_j\in \I_{\eps_j}$.\\
\item[(iii)]
If the two limits are equal, we say that the family
$\I_\eps$ {\rm converges} and write
$\lim\limits_{A\ni\eps \to 0}\I_\eps$ for
the common value of the upper and lower limits.
\end{enumerate}
\end{defn}

This last notion of convergence is equivalent to convergence in the topology of the Douady space
\cite[Section 3]{MRST}.  Clearly, $\liminf_\eps \I_\eps \subset \limsup_\eps \I_\eps$
and so $\ell(\liminf_\eps \I_\eps) \ge \ell(\limsup_\eps \I_\eps)$.  It also 
follows from \cite[Lemmas 2.1 and 2.2]{MRST} that $\ell(\limsup_\eps \I_\eps) \le \limsup \ell (I_\eps)$
and $\ell(\liminf_\eps \I_\eps) \le \liminf \ell (I_\eps)$.

\begin{theorem}
\label{limsupinf}
Let $\I_\eps$ be a family of ideals based on $N$ distinct points, so that $\ell(\I_\eps)=N$,
for any $\eps$.  
\begin{enumerate}
\item[(i)]
Let $\I := \limsup_\eps \I_\eps$. If $\ell(\I)\ge N$ (or equivalently $=N$), then
$\lim_\eps \I_\eps = \I$. 
\item[(ii)]
Let $\I := \liminf_\eps \I_\eps$. If $\ell(\I)\le N$ (or equivalently $=N$), then
$\lim_\eps \I_\eps = \I$. 
\end{enumerate}
\end{theorem}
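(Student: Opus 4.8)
The plan is to reduce everything to the behaviour of the Douady‑subsequential limits of the family $\I_\eps$, using the equivalence recalled above between convergence of ideals in the sense of Definition \ref{convid} and convergence in the Douady space. Two elementary ingredients drive the argument. The first is the purely algebraic fact that if $\I \subseteq \J$ are ideals of $\O(\Omega)$ of finite colength with $\ell(\I)=\ell(\J)$, then $\I=\J$: the natural surjection $\O(\Omega)/\I \twoheadrightarrow \O(\Omega)/\J$ between finite‑dimensional spaces of equal dimension is an isomorphism. The second is the topological fact that in the (metrizable) Douady space, a sequence all of whose convergent subsequences share one common limit $L$ is itself convergent to $L$.

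First I would record the sandwich. Fix any sequence $\eps_k\to 0$ in $A$. By compactness of the relevant part of the Douady space (lengths are bounded by $N$ near $0$), after passing to a subsequence $\I_{\eps_k}$ converges in the Douady space to an ideal $\J$; since length is locally constant on the Douady space, it is preserved in the limit and $\ell(\J)=N$. Reading off Definition \ref{convid}, every full‑sequence limit restricts to a limit along the subsequence and hence lies in $\J$, while every generator of $\J$ arises as a limit along the subsequence and hence lies in $\limsup_\eps\I_\eps$; thus
\[
\liminf_\eps \I_\eps \ \subseteq\ \J \ \subseteq\ \limsup_\eps \I_\eps .
\]
In particular $\ell(\limsup_\eps\I_\eps)\le N\le \ell(\liminf_\eps\I_\eps)$, which is the form of the colength estimates I shall use (matching the semicontinuity from \cite[Lemmas 2.1, 2.2]{MRST}).

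For part (i), the hypothesis $\ell(\I)=\ell(\limsup_\eps\I_\eps)\ge N$ together with $\ell(\limsup_\eps\I_\eps)\le N$ forces $\ell(\limsup_\eps\I_\eps)=N$ (this is the asserted equivalence). Now for an arbitrary subsequential limit $\J$ we have $\J\subseteq\limsup_\eps\I_\eps$ and $\ell(\J)=N=\ell(\limsup_\eps\I_\eps)$, so by the algebraic fact $\J=\limsup_\eps\I_\eps$. Every convergent subsequence of $\I_\eps$ thus has the same limit, and by the topological fact the whole family converges to it: $\lim_\eps\I_\eps=\limsup_\eps\I_\eps=\I$. Part (ii) is the mirror image: the hypothesis and $N\le\ell(\liminf_\eps\I_\eps)$ give $\ell(\liminf_\eps\I_\eps)=N$, and for any subsequential limit $\J$ we have $\liminf_\eps\I_\eps\subseteq\J$ with equal colength $N$, whence $\J=\liminf_\eps\I_\eps$; again all subsequential limits coincide and $\lim_\eps\I_\eps=\liminf_\eps\I_\eps=\I$.

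The main obstacle is the single step asserting that a Douady‑subsequential limit $\J$ has length exactly $N$ and sits between the lower and upper limits; this is where the real content lies, and it is precisely where one invokes the structure of the Douady space (constancy of length along it) and the translation between the analytic Definition \ref{convid} and Douady convergence from \cite[Section 3]{MRST}. Once that is secured the remainder is soft: a colength count combined with the elementary ``all subsequential limits agree'' principle. Notably, the argument never computes $\liminf_\eps\I_\eps$ or $\limsup_\eps\I_\eps$ explicitly, which is what makes it serviceable as a shortcut in the later proofs.
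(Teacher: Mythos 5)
Your proof is correct, but it follows a genuinely different route from the paper's. You run the whole argument inside the Douady space: sequential compactness of the locus of length-$N$ zero-dimensional subspaces supported in a fixed compact set, local constancy of the length there, and the equivalence (recalled after Definition \ref{convid} from \cite{MRST}) between Douady convergence and convergence of ideals. From these you extract the sandwich $\liminf_\eps\I_\eps\subseteq\J\subseteq\limsup_\eps\I_\eps$ with $\ell(\J)=N$ for every subsequential limit $\J$, and finish with two soft facts (nested ideals of equal finite colength coincide; a family in a compact metric space all of whose subsequential limits agree converges). The paper instead never touches the Douady space in the proof: it builds auxiliary ``grid'' ideals $\mathcal J_\eps=\I(P_\eps)\subseteq\I_\eps$ converging to a monomial ideal $\mathcal J$ of colength $d$, identifies every quotient $\O/\mathcal J_\eps$ with $\O/\mathcal J\cong\C^d$ via the explicit bases $\Psi_\alpha^\eps$, shows by Cauchy-integral formulas for the coefficients that upper and lower limits of the ideals correspond to upper and lower limits of the subspaces $\Phi_\eps(\I_\eps/\mathcal J_\eps)\subseteq\C^d$ (Proposition \ref{pro080612_02}), and concludes with the elementary Grassmannian Lemma \ref{lem080612_01}. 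What your route buys is brevity and a cleaner derivation of the colength estimates: your sandwich gives $\ell(\limsup_\eps\I_\eps)\le N\le\ell(\liminf_\eps\I_\eps)$, which is exactly what makes the parenthetical ``or equivalently $=N$'' meaningful in both parts (and note that the inequality for the lower limit displayed just before the theorem should indeed read in this direction). What it costs is reliance on two nontrivial, though standard, properties of the Douady space that you assert rather than prove --- properness of the length-$N$ locus over configurations in a compact set (Hilbert--Chow/Bishop-type compactness) and constancy of the Hilbert polynomial in flat families; if you keep this route, supply precise references for those two facts, since the paper's own argument is self-contained modulo \cite[Lemma 2.3]{MRST} and does not establish them.
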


\section{Proof of Theorem \ref{limsupinf}}

We will proceed by reducing everything to upper and lower limits of subspaces
of a single finite-dimensional vector space. 

We use multiindex notation, in particular if $\alpha, \beta \in \N^n$, $\alpha \le \beta$
means $\alpha_j \le \beta_j$, $1\le j \le n$ (and the analogous definition for ``$<$").

Let $\pi_j$ denote the projection to the $j$-th coordinate axis. Passing
to a subsequence if needed,
$N_j:=\#\pi_j(\{a_1^\eps, \dots, a_N^\eps\})$ is independent of $\eps$.
Let $\mathcal N:= (N_1, \dots, N_n)$ and
$$
P_\eps := \pi_1(\{a_1^\eps, \dots, a_N^\eps\}) \times \cdots
\times \pi_n(\{a_1^\eps, \dots, a_N^\eps\})  \mbox{ (cartesian product)}
$$
As in \cite[Section 2]{MRST}, 
we now define a simpler sequence of ideals contained in each
 $\I_\eps=\I( \{a_1^\eps, \dots, a_N^\eps\})$. 
Let $\mathcal J_\eps := \I (P_\eps)$.  It is easy to see that $d:= \ell (\mathcal J_\eps)
= \#P_\eps = \prod_{j=1}^n N_j \le N^n$, and \cite[Lemma 2.3]{MRST} gives
$$
\lim_{\eps\to0} \mathcal J_\eps = \mathcal J
:= \langle z_1^{N_1}, \dots, z_n^{N_n} \rangle
= \left\{ f \in \O(\Omega) : 
\frac{\partial^{\alpha} f}{\partial z^\alpha} : \alpha < \mathcal N
\right\}. 
$$
{\bf Claim.} $\mathcal{O} / \mathcal{J}_\varepsilon \cong \mathbb{C}^d \cong \mathcal{O} / \mathcal{J}$. 

Indeed, denote by $b_j^{i, \varepsilon}$ the elements of $\pi_j(\{a_1^\eps, \dots, a_N^\eps\})$.
For $ \alpha \le \mathcal N$, set $\Psi_\alpha(z) = z^\alpha$, and for
$\zeta \in \mathbb{C}$, $1 \leqslant j \leqslant n$, $0\leqslant k  \leqslant N_j - 1$,
\begin{equation*}
\varphi_{k,j}^\varepsilon\big(\zeta\big) := \prod_{i=1}^k
(\zeta - b_j^{i, \varepsilon})
 \end{equation*}

Let 
\begin{equation*}
\Psi_\alpha^\varepsilon(z) :=  \prod_{j=1}^n \varphi_{\alpha_j,j}^\varepsilon\big(z_j\big).
\end{equation*}
Since all the $b_j^{i, \varepsilon}$ tend to $0$, it is easy to see that
for $\eps$ small enough (including $\eps=0$) 
the system $\{\Psi_\alpha^\varepsilon,  \alpha < \mathcal N\}$ is linearly independent.

Let $[\cdot]_\eps$ (resp. $[\cdot]$) denote the class of a function in 
$\mathcal{O} / \mathcal{J}_\eps$ (resp. $\mathcal{O} / \mathcal{J}$). 
The natural projection from $\mbox{Span} \{\Psi_\alpha^\varepsilon,  \alpha \le \mathcal\}$
to $\mathcal{O} / \mathcal{J}_\eps$ is injective, thus bijective, and 
$\{[\Psi_\alpha^\varepsilon]_\eps,  \alpha < \mathcal N\}$ is a basis
of $\mathcal{O} / \mathcal{J}_\eps$.  Then the linear map defined by 
$\Phi_\varepsilon \bigg(\big[\Psi_\alpha^\varepsilon\big]_{\mathcal{J}_\varepsilon}\bigg) = \big[\Psi_\alpha\big]$, for $\alpha < \mathcal N$, is the required isomorphism.

\begin{lemma}
\label{pro_chap2_280313} 
Suppose that $\lim_\eps f_\eps =f$, uniformly on compacta of $\Omega$. Then,
in the finite dimensional vector space $\mathcal{O} / \mathcal{J}$, 
$\big\{\Phi_\varepsilon \big([f_\eps]_{\mathcal{J}_\varepsilon}\big)\big\}  \to [f]$ as $\varepsilon \to 0$.
\end{lemma}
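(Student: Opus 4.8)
The plan is to read off the coordinates of $\Phi_\eps\big([f_\eps]_{\mathcal{J}_\eps}\big)$ in the monomial basis $\{[\Psi_\alpha] : \alpha < \mathcal N\}$ of $\mathcal{O}/\mathcal{J}$ and to show each one converges to the corresponding Taylor coefficient of $f$. First I would expand $[f_\eps]_{\mathcal{J}_\eps} = \sum_{\alpha < \mathcal N} c_\alpha^\eps\, [\Psi_\alpha^\eps]_{\mathcal{J}_\eps}$ in the Newton basis, so that by the very definition of $\Phi_\eps$ one has $\Phi_\eps\big([f_\eps]_{\mathcal{J}_\eps}\big) = \sum_\alpha c_\alpha^\eps\,[\Psi_\alpha]$. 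On the target side, since $\mathcal{J} = \langle z_1^{N_1}, \dots, z_n^{N_n}\rangle$, every monomial $z^\beta$ with $\beta \not< \mathcal N$ lies in $\mathcal{J}$, so $[f] = \sum_{\alpha < \mathcal N} a_\alpha [\Psi_\alpha]$ where $a_\alpha = \frac{1}{\alpha!}\partial^\alpha f(0)$ are the Taylor coefficients of $f$. As $\{[\Psi_\alpha]\}$ is a basis, it suffices to prove $c_\alpha^\eps \to a_\alpha$ for each $\alpha < \mathcal N$.

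The second step is to identify $c_\alpha^\eps$ concretely. Because $P_\eps$ is a Cartesian grid and $\mathcal{J}_\eps = \mathcal{I}(P_\eps)$, the quotient $\mathcal{O}/\mathcal{J}_\eps$ is the space of values on the grid, and the expansion in the tensor-product Newton basis $\Psi_\alpha^\eps = \prod_j \varphi_{\alpha_j,j}^\eps(z_j)$ is exactly multivariate Newton interpolation on that grid; its coefficients are the mixed divided differences of $f_\eps$ with respect to the nodes $b_j^{i,\eps}$. For holomorphic $f_\eps$ these divided differences carry a Cauchy representation: fixing radii $r_j > 0$ small enough that the closed polydisc lies in $\Omega$ and, for $\eps$ small, encloses all the nodes,
\[
c_\alpha^\eps = \frac{1}{(2\pi i)^n}\oint_{|\zeta_1|=r_1}\cdots\oint_{|\zeta_n|=r_n}
\frac{f_\eps(\zeta)}{\prod_{j=1}^n\prod_{i=1}^{\alpha_j+1}(\zeta_j - b_j^{i,\eps})}\,d\zeta_1\cdots d\zeta_n,
\]
obtained by tensoring the one-variable residue identity $g[b^1,\dots,b^{m}] = \frac{1}{2\pi i}\oint g(\zeta)/\prod_{i=1}^m(\zeta-b^i)\,d\zeta$ (with $m = \alpha_j+1 \le N_j$) and applying Fubini.

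Then I would pass to the limit in a single stroke. On the fixed torus $\{|\zeta_j| = r_j\}$ the denominator stays bounded away from $0$, and since every node $b_j^{i,\eps}\to 0$ it converges uniformly there to $\prod_j \zeta_j^{\alpha_j+1}$, while $f_\eps \to f$ uniformly on that torus by the hypothesis of uniform convergence on compacta. Hence the integrand converges uniformly on the torus and
\[
c_\alpha^\eps \longrightarrow \frac{1}{(2\pi i)^n}\oint \frac{f(\zeta)}{\prod_j \zeta_j^{\alpha_j+1}}\,d\zeta = \frac{1}{\alpha!}\,\partial^\alpha f(0) = a_\alpha,
\]
which is precisely what is required.

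The main obstacle I anticipate is the second step: justifying cleanly that the coordinate $c_\alpha^\eps$ of $[f_\eps]_{\mathcal{J}_\eps}$ in the Newton basis is the tensor divided difference and admits the contour representation above, and in particular that the merging of the nodes as $\eps \to 0$ produces no blow-up (the contour is kept at a fixed positive distance from the origin, so the denominator never degenerates on it). Once this representation is secured, the double limit — nodes tending to $0$ together with $f_\eps \to f$ — is absorbed entirely into the uniform convergence of the integrand on a fixed compact torus, and the identification of the coordinates of $[f]$ with Taylor coefficients in $\mathcal{O}/\mathcal{J}$ is immediate.
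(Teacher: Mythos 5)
Your proposal is correct and follows essentially the same route as the paper: expand $f_\eps$ in the Newton basis $\{\Psi_\alpha^\eps\}$ of $\mathcal{O}/\mathcal{J}_\eps$, represent the coefficients $c_\alpha^\eps$ as Cauchy integrals over a fixed torus kept away from the colliding nodes, and pass to the limit using uniform convergence of $f_\eps$ and of the denominators. The only (cosmetic) difference is the kernel: you use the standard divided-difference denominator $\prod_{j}\prod_{i\le\alpha_j+1}(\zeta_j-b_j^{i,\eps})$, whereas the paper integrates $f_\eps/\Psi_\alpha^\eps$ against $\prod_j dz_j/z_j$; since all nodes tend to $0$, both representations converge to the same Taylor-coefficient integral $\frac{1}{\alpha!}\partial^\alpha f(0)$.
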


\begin{proof}
There is a unique choice of coefficients $c_\alpha^\eps (f) $ such that
 $f_\eps = \sum_{\alpha < \mathcal N} c_\alpha^\eps (f_\eps) \Psi_\alpha^\varepsilon + h_\eps$,
 with $h_\eps \in \mathcal{J}_\varepsilon$.
It will be enough to show that $c_\alpha^\eps (f_\eps) \to c_\alpha (f)$ as $\varepsilon \to 0$,
for each $\alpha$.

By rescaling, we might assume that $\overline \D^n \subset \Omega$.
One can prove by induction on $n$ (or deduce as an easy special case
from the beginning of \cite{Ts}) that 
if $|\eps|$ is small
enough, then 
$$
c_\alpha^\eps (f_\eps) = \frac1{(2i\pi)^n}\int_{(\partial \D)^n} \frac{f_\eps(z_1, \dots, z_n)}
{\Psi_\alpha^\varepsilon(z)} \frac{dz_1}{z_1} \dots \frac{dz_n}{z_n},
$$
and one sees that those integrals converge towards the required limit.
\end{proof}

We define upper and lower limits for families of subspaces in a finite dimensional
vector space $\C^d$ by first choosing a norm on it.  Since they are equivalent,
we may as well choose a euclidean norm, and we do.

Then let $L_\varepsilon$ be 
a family of subspaces of $\mathbb{C}^d$ 
such that $\dim L_\varepsilon = k$, for any $\eps$. 
Let $K_\varepsilon := L_\varepsilon \cap \overline{B}(0;1)$. 
We can define the upper and lower limits of 
   $L_\varepsilon$ 
by $\liminf_\varepsilon L_\varepsilon := Span\big(\lim\inf K_\varepsilon\big)$ 
where $\lim\inf K_\varepsilon$ is taken in the sense of the Hausdorff distance
between compacta (and inclusion as an order relation), and analogously
$\limsup_\varepsilon L_\varepsilon := Span\big(\lim\sup K_\varepsilon\big)$.

\begin{prop}\label{pro080612_02} 
\begin{enumerate}
\item 
$\underset{\varepsilon \rightarrow 0}{\limsup} \,\Phi_\varepsilon\big(\mathcal{I}_\varepsilon / \mathcal{J}_\varepsilon\big) = (\limsup\mathcal{I}_\eps) / \mathcal{J}$,
\item
$\underset{\varepsilon \rightarrow 0}{\liminf} \,\Phi_\varepsilon\big(\mathcal{I}_\varepsilon / \mathcal{J}_\varepsilon\big) = (\liminf \mathcal{I}_\eps) / \mathcal{J}$.
\end{enumerate}
\end{prop}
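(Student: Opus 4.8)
The plan is to transport everything, via the isomorphisms $\Phi_\eps$, into the single fixed finite-dimensional space $\mathcal O/\mathcal J$, and there identify the vector-space $\liminf$/$\limsup$ with the images of the function-theoretic limits defining $\liminf_\eps\I_\eps$ and $\limsup_\eps\I_\eps$. Since $\mathcal J_\eps\subset\I_\eps$ and $\lim_\eps\mathcal J_\eps=\mathcal J$, one first records that $\mathcal J\subset\liminf_\eps\I_\eps\subset\limsup_\eps\I_\eps$, so that both right-hand sides $(\liminf_\eps\I_\eps)/\mathcal J$ and $(\limsup_\eps\I_\eps)/\mathcal J$ are genuine subspaces of $\mathcal O/\mathcal J$; moreover $\dim L_\eps=d-N$ is constant (writing $L_\eps:=\Phi_\eps(\I_\eps/\mathcal J_\eps)$), so the subspace limits are defined. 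The first preparatory step is a purely linear-algebraic observation: for a family of subspaces $L_\eps\subset\C^d$ of constant dimension, a scaling argument against the unit ball gives
$$
\liminf_\eps L_\eps=\{v:\exists\, v_\eps\in L_\eps,\ v_\eps\to v\},\qquad
\limsup_\eps L_\eps=\operatorname{Span}\{v:\exists\,\eps_j\to0,\ v_j\in L_{\eps_j},\ v_j\to v\}.
$$
(The liminf set is already a subspace; the limsup set need not be, whence the span.)

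For part (ii) I would prove the two inclusions separately. For ``$\supseteq$'', take $f\in\liminf_\eps\I_\eps$, so $f_\eps\to f$ with $f_\eps\in\I_\eps$; Lemma \ref{pro_chap2_280313} gives $\Phi_\eps([f_\eps]_\eps)\to[f]$, and since $\Phi_\eps([f_\eps]_\eps)\in L_\eps$, the characterization yields $[f]\in\liminf_\eps L_\eps$. For ``$\subseteq$'', take $v\in\liminf_\eps L_\eps$, so $v_\eps\to v$ with $v_\eps=\Phi_\eps([g_\eps]_\eps)$, $g_\eps\in\I_\eps$. Writing $v_\eps=\sum_{\alpha<\mathcal N}c_\alpha^\eps[\Psi_\alpha]$ in the fixed basis, the convergence $v_\eps\to v=\sum_\alpha c_\alpha[\Psi_\alpha]$ is exactly coordinatewise convergence $c_\alpha^\eps\to c_\alpha$. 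The reconstructed functions $\tilde g_\eps:=\sum_{\alpha<\mathcal N}c_\alpha^\eps\Psi_\alpha^\eps$ satisfy $\tilde g_\eps\in\I_\eps$ (they differ from $g_\eps$ by an element of $\mathcal J_\eps\subset\I_\eps$), and because $\Psi_\alpha^\eps\to z^\alpha$ uniformly on compacta (all $b_j^{i,\eps}\to0$) together with $c_\alpha^\eps\to c_\alpha$, we obtain $\tilde g_\eps\to f:=\sum_\alpha c_\alpha z^\alpha$ locally uniformly. Hence $f\in\liminf_\eps\I_\eps$ and $[f]=v$, giving $v\in(\liminf_\eps\I_\eps)/\mathcal J$.

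For part (i) the same translation works verbatim along subsequences $\eps_j\to0$, so that the set $W:=\{v:\exists\,\eps_j\to0,\ v_j\in L_{\eps_j},\ v_j\to v\}$ coincides with the image $[G]$ in $\mathcal O/\mathcal J$ of the set $G$ of all functions arising as such subsequential limits $f_j\to f$, $f_j\in\I_{\eps_j}$. The one extra point is to reconcile the $\operatorname{Span}$ appearing in $\limsup_\eps L_\eps=\operatorname{Span}(W)$ with the ideal generated by $G$ that defines $\limsup_\eps\I_\eps$: since each $\I_{\eps_j}$ is an ideal, $p\,f_j\in\I_{\eps_j}$ and $p\,f_j\to p\,f$ for every $p\in\mathcal O(\Omega)$, so $G$ is already closed under multiplication by $\mathcal O(\Omega)$; consequently the ideal it generates equals its $\C$-linear span, and $(\limsup_\eps\I_\eps)/\mathcal J=\operatorname{Span}_\C([G])=\operatorname{Span}(W)=\limsup_\eps L_\eps$.

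The main obstacle --- the crux to handle with care rather than a deep difficulty --- is this last reconciliation of two a priori different notions of ``limit'': the $\Phi_\eps$ are only linear isomorphisms, not ring maps, so each $L_\eps$ need not be an ideal of $\mathcal O/\mathcal J$, and neither need $\limsup_\eps L_\eps$ be; it is precisely the multiplicative stability of the family of limit functions $G$ (inherited from the $\I_\eps$ being ideals) that forces the vector-space span to coincide with the ideal-theoretic object. A secondary technical point is the linear-algebra characterization of the subspace limits, where one must check that normalizing to the unit ball loses no limit direction when $\dim L_\eps$ is held constant.
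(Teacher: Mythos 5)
Your proposal is correct and follows essentially the same route as the paper: transport everything into $\mathcal O/\mathcal J$ via $\Phi_\eps$, use Lemma \ref{pro_chap2_280313} for one inclusion, and reconstruct approximating functions $\sum_\alpha c_\alpha^\eps\Psi_\alpha^\eps\in\I_\eps$ from convergent coefficient vectors for the reverse inclusion. If anything, you are more explicit than the paper on the one delicate point --- reconciling the vector-space $\operatorname{Span}$ in the definition of $\limsup_\eps L_\eps$ with the \emph{ideal} generated by the subsequential limits, via the observation that the set of such limits is stable under multiplication by $\mathcal O(\Omega)$ --- which the paper handles only implicitly by restricting to a generating system.
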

\begin{proof}
To prove that $\limsup \mathcal I_\eps / \mathcal{J} \subset \limsup\left( \Phi_\varepsilon(\mathcal I_\eps /\mathcal J_\eps)\right)$, it is enough to consider elements $[f]$ where $f$ is in a generating system
of $\limsup \mathcal I_\eps$.  So 
there exist
$(\varepsilon_j)_{j \in \mathbb{Z}_+}, \varepsilon_j \to 0$ as $j \to +\infty$ 
and $f_j \in \mathcal{I}_{\varepsilon_j}$ such that $f_j \to f$ uniformly on compacta of $\Omega$. 
Proposition \ref{pro_chap2_280313} implies that
$\Phi_{\varepsilon_j}\big([f_j]_{\mathcal{J}_{\varepsilon_j}}\big) \to [f]$.

Conversely, take $g \in \mathcal{O} / \mathcal{J}$ such that 
there exists $(\varepsilon_j)_{j \in \mathbb{Z}_+}, \varepsilon_j \to 0$ as $j \to +\infty$ and
$g_j \in \mathcal{I}_{\varepsilon_j}$ such that $\|\Phi_{\varepsilon_j}\big([g_j]_{\mathcal{J}_{\varepsilon_j}}\big) - [g]\| \to 0$ as $j \to +\infty$. Then 
$|C_\alpha^{\varepsilon_j}(g_j) - C_\alpha(g)| \to 0$ for any $\alpha <N$. 
We can write
\begin{equation*}
g(z) = \underset{\alpha \in \Gamma}{\sum} C_\alpha(g) z^\alpha + \sum_{j=1}^n z_j^{N_j}R_j(z) 
\mbox{ and }
\end{equation*}
\begin{equation*}
\big[g_j(z)\big]_{\mathcal{J}_{\varepsilon_j}} = \underset{\alpha \in \Gamma}{\sum} C_\alpha^{\varepsilon_j}(g_j) \big[\Psi_\alpha^{\varepsilon_j}(z)\big]_{\mathcal{J}_{\varepsilon_j}} \in \mathcal{I}_{\varepsilon_j} / \mathcal{J}_{\varepsilon_j}.
\end{equation*}
Set
\begin{equation*}
f_j(z) := \underset{\alpha \in \Gamma}{\sum} C_\alpha^{\varepsilon_j}(g_j) \Psi_\alpha^{\varepsilon_j}(z) + 
\sum_{j=1}^n \prod_{i=1}^{N_j}\big(z_j - b_j^{i,\varepsilon_j}\big)R_j(z).
\end{equation*}
Then $f_j \in \mathcal{I}_{\varepsilon_j}$ and $f_j \to g$ uniformly on compacta of $\Omega$. 

Since the $g$'s as above form a generating system
for $\limsup\left( \Phi_\varepsilon(\mathcal I_\eps /\mathcal J_\eps)\right)$, we are done.

The proof for $\liminf$ is analogous and we omit it.
\end{proof}

The proof of our theorem then reduces to an elementary fact about families
of finite dimensional spaces. 

 \begin{lemma}\label{lem080612_01} 
 Let $(L_\varepsilon)$ be a family of vector subspaces of $\mathbb{C}^d$ 
such that $\dim L_\varepsilon = k \le n$, for any $\varepsilon$.
\begin{enumerate}
\item
  If $\dim(\underset{\varepsilon \rightarrow 0}{\lim\sup}\; L_\varepsilon) = k$, then 
$\underset{\varepsilon \rightarrow 0}{\lim\inf}\; L_\varepsilon = \underset{\varepsilon \rightarrow 0}{\lim\sup}\; L_\varepsilon$.
\item
  If $\dim(\underset{\varepsilon \rightarrow 0}{\lim\inf}\; L_\varepsilon) = k$, then 
$
\underset{\varepsilon \rightarrow 0}{\lim\inf}\; L_\varepsilon = \underset{\varepsilon \rightarrow 0}{\lim\sup}\; L_\varepsilon$.
\end{enumerate}
\end{lemma}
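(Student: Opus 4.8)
The plan is to reduce both statements to the compactness of the Grassmannian together with a sandwiching of every subsequential limit between $\liminf_\varepsilon L_\varepsilon$ and $\limsup_\varepsilon L_\varepsilon$. First I would recall that on the set of $k$-dimensional subspaces of $\C^d$ the map $L \mapsto K_L := L \cap \overline{B}(0;1)$ is a homeomorphism onto its image for the Hausdorff distance, so that the Hausdorff-distance notion used to define $\liminf$ and $\limsup$ coincides, \emph{on subspaces of fixed dimension $k$}, with convergence in the compact Grassmannian $G(k,d)$. In particular every sequence $\varepsilon_j \to 0$ admits a subsequence along which $L_{\varepsilon_j}$ converges to some $L' \in G(k,d)$, and then $K_{\varepsilon_j} \to K_{L'}$ in Hausdorff distance.

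Next I would record two inclusions valid for \emph{every} such subsequential limit $L'$. On one hand, each point of $K_{L'}$ is the limit of a sequence $x_{\varepsilon_j} \in K_{\varepsilon_j}$, hence lies in $\limsup_\varepsilon K_\varepsilon$; taking spans gives $L' \subseteq \limsup_\varepsilon L_\varepsilon$. On the other hand, if $x \in \liminf_\varepsilon K_\varepsilon$ there are $x_\varepsilon \in K_\varepsilon$ with $x_\varepsilon \to x$, so in particular $x_{\varepsilon_j} \to x$ with $x_{\varepsilon_j} \in K_{\varepsilon_j} \to K_{L'}$, whence $x \in K_{L'}$; taking spans gives $\liminf_\varepsilon L_\varepsilon \subseteq L'$. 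Since $\dim L' = k$, this already yields the unconditional bounds $\dim(\liminf_\varepsilon L_\varepsilon) \le k \le \dim(\limsup_\varepsilon L_\varepsilon)$, and displays every subsequential limit as squeezed, $\liminf_\varepsilon L_\varepsilon \subseteq L' \subseteq \limsup_\varepsilon L_\varepsilon$.

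With this in place both parts follow from the same dimension count. For (i), the hypothesis $\dim(\limsup_\varepsilon L_\varepsilon) = k$, together with $L' \subseteq \limsup_\varepsilon L_\varepsilon$ and $\dim L' = k$, forces $L' = \limsup_\varepsilon L_\varepsilon$ for every subsequential limit; for (ii), the hypothesis $\dim(\liminf_\varepsilon L_\varepsilon) = k$, together with $\liminf_\varepsilon L_\varepsilon \subseteq L'$, forces $L' = \liminf_\varepsilon L_\varepsilon$ for every subsequential limit. In either case all subsequential limits of $(L_\varepsilon)$ in $G(k,d)$ coincide, so by compactness the whole family converges, $L_\varepsilon \to L_\infty$, to that common value. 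Then $K_\varepsilon \to K_{L_\infty}$ in Hausdorff distance, so $\liminf_\varepsilon K_\varepsilon = \limsup_\varepsilon K_\varepsilon = K_{L_\infty}$, and spanning gives $\liminf_\varepsilon L_\varepsilon = \limsup_\varepsilon L_\varepsilon = L_\infty$, as desired.

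The only genuine care needed — rather than a real obstacle — is the first step: checking that the paper's Hausdorff-distance definition of $\liminf/\limsup$ of the balls $K_\varepsilon$ is compatible with Grassmannian convergence of the $L_\varepsilon$ (which uses crucially that all $L_\varepsilon$ share the \emph{same} dimension $k$), and that the $\mathrm{Span}$ operation respects the inclusions above, i.e. that $\mathrm{Span}(K_{L'}) = L'$ because $K_{L'}$ is a full-dimensional ball of $L'$. Everything else is the elementary compactness-and-dimension argument sketched above.
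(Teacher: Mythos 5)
Your proof is correct, and it takes a genuinely different route from the paper's. You compactify first: identifying Hausdorff convergence of the unit balls $K_\varepsilon$ with convergence in the Grassmannian $G(k,d)$, extracting a subsequential limit $L'$ from any sequence $\varepsilon_j\to 0$, and establishing the sandwich $\liminf_\varepsilon L_\varepsilon \subseteq L' \subseteq \limsup_\varepsilon L_\varepsilon$, so that both parts collapse to one dimension count plus the standard fact that a family in a compact metric space with a unique subsequential limit converges. The paper instead treats the two parts by separate, entirely hands-on arguments from the Kuratowski-type definitions: for (1), it uses that $K_\varepsilon$ eventually lies in an $\eta$-neighborhood of $K_L$ with $L=\limsup_\varepsilon L_\varepsilon$, notes that equality of dimensions forces the orthogonal projection $L_\varepsilon \to L$ to be bijective with image of the unit ball containing $L\cap \overline{B}(0;(1-\eta^2)^{1/2})$, and concludes $K_L\subseteq \liminf_\varepsilon K_\varepsilon$; for (2), it derives a contradiction by exhibiting $k+1$ asymptotically independent vectors in $L_{\varepsilon_j}$ (approximations of an orthonormal basis of $\liminf_\varepsilon L_\varepsilon$ together with a vector of $\limsup_\varepsilon L_\varepsilon$ orthogonal to it). Your approach is more uniform and conceptually cleaner, and it yields as a byproduct the unconditional inequalities $\dim(\liminf_\varepsilon L_\varepsilon)\le k\le \dim(\limsup_\varepsilon L_\varepsilon)$; its only cost is the need to justify (as you rightly flag) that the Hausdorff metric on unit balls induces the Grassmannian topology on subspaces of fixed dimension $k$ --- a standard fact, e.g.\ via continuity of $L\mapsto K_L$ plus compactness of $G(k,d)$. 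The paper's argument is longer but self-contained, using only orthogonal projections and linear independence. Both proofs ultimately rest on the same two ingredients: the constancy of $\dim L_\varepsilon = k$ and the inclusion $\liminf_\varepsilon \subseteq \limsup_\varepsilon$.
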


\begin{proof}
(1).  Let $L$ stand for $\limsup L_\eps$. For any $\eta \in (0, \frac12)$, there exists
$\eps_\eta >0$ such that $|\eps| \le \eps_\eta$ implies that $L_\eps \cap \overline{B}(0;1)$
is contained in an $\eta$-neighborhood of $L \cap \overline{B}(0;1)$.  So the orthogonal
projection of $L_\eps \cap \overline{B}(0;1)$ to $L$ must contain at least the ball
$L\cap \overline{B}(0;(1-\eta^2)^{1/2})$, and any point of $L \cap \overline{B}(0;1)$
is a distance at most $\eta + 1-(1-\eta^2)^{1/2}$ from $L_\eps \cap \overline{B}(0;1)$,
so $L \subset \liminf_\eps L_\eps$.

(2). Let $L:= \liminf L_\eps$. If we had $\limsup L_\eps \not \subset L$, then
$\limsup L_\eps \supsetneq L$ and we can pick a unit vector $v \in \limsup L_\eps \cap L^\perp$. 
We can find a sequence $\eps_j \to 0$ and vectors $v_j \to v$, $v_j \in L_{\eps_j}$.
$L_{\eps_j}$ must also contain $k$ vectors $e_1^{\eps_j}, \dots, e_k^{\eps_j}$ close 
to the vectors in an orthonormal basis $e_1, \dots, e_k$ 
of $L$.  For $j$ large enough, the system $e_1^{\eps_j}, \dots, e_k^{\eps_j}, v_j$ will 
have to be linearly independent, which contradicts $\dim L_{\eps_j}=k$.
\end{proof}

\section{Proofs of Theorems \ref{theo_ICU_chap_4}, \ref{theo_02_chap_04} and \ref{pro11122012_01} }

\subsection{Previous results}

\begin{defn}
A (point based) ideal is a \emph{complete intersection ideal} if and only if 
it admits a set of $n$ generators, where $n$ is the dimension of the ambient space.
\end{defn}

The main result of \cite{MRST}, Theorem 1.11, states:

\begin{theorem}
\label{thmiff}
Let  $\mathcal I_\eps= \mathcal I(S_\eps)$, 
where $S_\eps$ is a set of $N$ points all tending to $0$ and
assume that
$\lim_{\eps\to 0} \mathcal I_\eps = \mathcal I$.
Then $(G_{\mathcal I_\eps})$ converges to $G_{\mathcal I}$ locally
uniformly on $\Omega \setminus \{0\}$ if and only if 
$\mathcal I$ is a complete intersection ideal.
\end{theorem}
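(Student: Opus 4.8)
The plan is to read off both implications from a single numerical invariant, the residual Monge--Amp\`ere mass at the origin, $\tau(v):=(dd^c v)^n(\{0\})$, exploiting its two incarnations: analytically as the mass of a Green function, algebraically as a multiplicity. Recall (Demailly \cite{De1}; see also \cite{Ra-Si}) that for an $\mathfrak{M}_0$-primary ideal $\mathcal I$ one has $\tau(G_{\mathcal I})=e(\mathcal I)$, the Hilbert--Samuel multiplicity of $\mathcal I$, and that in the regular local ring $\mathcal O_{\C^n,0}$ one always has $e(\mathcal I)\ge \ell(\mathcal I)$ with equality \emph{if and only if} $\mathcal I$ is a complete intersection. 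Since $\lim_\eps \mathcal I_\eps=\mathcal I$ forces $\ell(\mathcal I)=\lim_\eps \ell(\mathcal I_\eps)=N$, the theorem follows once I show that $G_{\mathcal I_\eps}\to G_{\mathcal I}$ holds exactly when $\tau(G_{\mathcal I})=N$, i.e. when $e(\mathcal I)=\ell(\mathcal I)$.

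For the \emph{if} direction, suppose $\mathcal I=\langle f_1,\dots,f_n\rangle$ is a complete intersection, so $e(\mathcal I)=\ell(\mathcal I)=N$. Because $\mathcal I=\liminf_\eps \mathcal I_\eps$ (Definition \ref{convid}), I would choose $f_j^\eps\in\mathcal I_\eps$ with $f_j^\eps\to f_j$ locally uniformly. For small $\eps$ the common zero set $\{f_1^\eps=\dots=f_n^\eps=0\}$ near $0$ carries total intersection multiplicity $e(\mathcal I)=N$ (stability of a proper intersection under small deformation) and contains the $N$ distinct points of $S_\eps$; hence it equals $S_\eps$ with every point simple, so $\langle f_1^\eps,\dots,f_n^\eps\rangle$ is radical and, having the same colength $N$ as $\mathcal I_\eps\supset\langle f_j^\eps\rangle$, equals $\mathcal I_\eps$. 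Thus each $\mathcal I_\eps$ is itself a complete intersection whose generators converge to those of $\mathcal I$, and $G_{\mathcal I_\eps}=\max_j\log|f_j^\eps|+O(1)$ with the $O(1)$ \emph{uniform} in $\eps$ (an equicontinuity/normal-families argument, the proper intersection staying uniformly controlled). Letting $\eps\to0$ then yields $G_{\mathcal I_\eps}\to \max_j\log|f_j|+O(1)=G_{\mathcal I}$ locally uniformly off $0$.

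For the \emph{only if} direction I argue through masses. The measures $(dd^c G_{\mathcal I_\eps})^n=\sum_{k=1}^N\delta_{a_k^\eps}$ carry total mass $N$ with all poles tending to $0$, so by Bedford--Taylor continuity on $\Omega\setminus\{0\}$ (the convergence being uniform there by \cite{Ra-Th}) any limit of the $G_{\mathcal I_\eps}$ has Monge--Amp\`ere mass supported at the origin. If $G_{\mathcal I_\eps}\to G_{\mathcal I}$, this gives $(dd^c G_{\mathcal I})^n=\tau(G_{\mathcal I})\,\delta_0$. The decisive point is that $\tau(G_{\mathcal I})=N$: together with $\tau(G_{\mathcal I})=e(\mathcal I)$ and $e(\mathcal I)\ge\ell(\mathcal I)=N$ this forces $e(\mathcal I)=\ell(\mathcal I)$, i.e. $\mathcal I$ a complete intersection. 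Conversely, if $\mathcal I$ is \emph{not} a complete intersection then $\tau(G_{\mathcal I})=e(\mathcal I)>N$, so $G_{\mathcal I}$ is strictly more singular at $0$ than any limit of the $G_{\mathcal I_\eps}$ (whose residual mass is at most $N$), and convergence to $G_{\mathcal I}$ cannot occur.

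The main obstacle is exactly this mass bookkeeping: showing that the residual mass of the limit is \emph{at most} $N$ (hence equal to $N$). Soft semicontinuity runs the wrong way here, since Lelong numbers and Monge--Amp\`ere masses typically jump \emph{up} in a limit, which would only yield $\ge N$; one must instead rule out the \emph{creation} of excess mass at $0$ as the $N$ simple poles coalesce. I would extract this from the comparison principle applied on shrinking balls $B(0,r)$, combined with the quantitative estimates of \cite{Ra-Th}, which control $G_{\mathcal I_\eps}$ near the cluster of poles finely enough to transfer the total flux $N$ to the limit without loss or gain. Once $\tau(\lim_\eps G_{\mathcal I_\eps})=N$ is secured, the domination principle for the Monge--Amp\`ere operator promotes the always-valid comparison $\liminf_\eps G_{\mathcal I_\eps}\ge G_{\mathcal I}$ to an equality in the complete-intersection case, closing the equivalence.
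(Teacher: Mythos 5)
First, a point of order: the paper does not prove Theorem \ref{thmiff} at all --- it is quoted verbatim as the main result (Theorem 1.11) of \cite{MRST}, so there is no in-paper argument to compare yours against. Your framework (residual Monge--Amp\`ere mass at the origin equals the Hilbert--Samuel multiplicity $e(\mathcal I)$, together with $e(\mathcal I)\ge\ell(\mathcal I)=N$ with equality precisely for complete intersections) is the right circle of ideas and is close in spirit to how \cite{MRST} actually argue; the ``if'' direction you sketch is essentially a re-derivation of the Uniform Complete Intersection Condition and of Theorem \ref{thmgci} (that is, \cite[Theorem 1.8]{MRST}), which the present paper also imports rather than proves.

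That said, the proposal has a genuine unclosed gap, and you name it yourself: the claim that the residual mass of $\lim_\eps G_{\mathcal I_\eps}$ at $0$ is exactly $N$. This is not bookkeeping; it is the entire content of the ``only if'' direction. General semicontinuity of Lelong numbers and Monge--Amp\`ere masses under $L^1_{\mathrm{loc}}$ limits lets masses jump in the direction that does not help you, the Monge--Amp\`ere operator is not continuous under the convergence you have, and the appeal to ``the comparison principle on shrinking balls combined with \cite{Ra-Th}'' is a placeholder rather than an argument. Until that lemma is established, the chain $\tau(G_{\mathcal I})=N\Rightarrow e(\mathcal I)=\ell(\mathcal I)\Rightarrow\mathcal I$ is a complete intersection never starts. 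There are secondary gaps in the ``if'' direction as well: you need the $O(1)$ in $G_{\mathcal I_\eps}=\max_j\log|f_j^\eps|+O(1)$ to be uniform in $\eps$, and you need $(f_1^\eps,\dots,f_n^\eps)$ to have no common zeros near $\overline\Omega$ other than $S_\eps$; these are precisely conditions (1)--(3) of Definition \ref{defgci} and do not follow from an unspecified ``equicontinuity/normal-families argument.'' The efficient course is to cite \cite[Theorem 1.11]{MRST} as the paper does, or, for the ``if'' half, to verify the UCI condition explicitly and invoke Theorem \ref{thmgci}.
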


The following was also defined in \cite{MRST}.

\begin{defn}
\label{defgci} 
The family of ideals $(\mathcal I_\eps)$ satisfies the \emph{Uniform Complete Intersection
Condition} if for any $\eps$, there exists a map $\Psi_0$ and maps $\Psi_\eps$ 
 from a neighborhood
of $\overline \Omega$ to $\C^n$ such that $\Psi_0$  is proper 
from $\Omega$ to $\Psi_0(\Omega)$, and
\begin{enumerate}
\item 
$\{a_j^\eps, 1 \le j \le N\} = \Psi_\eps^{-1}\{0\}$, for all $\eps$;
\item
For all $\eps \neq 0$, $1\le j \le N$ and $z$ in a neighborhood of $a_j^\eps$,
$$
\left| \log \|\Psi_\eps(z) \| - \log \|z - a_j^\eps \|
\right| \le C(\eps) < \infty ;
$$
\item
$\lim_{\eps\to 0}  \Psi_\eps = \Psi=( \Psi^1, \dots,  \Psi^n)$, uniformly on  $\overline \Omega$.
\end{enumerate}
\end{defn}
Notice that the first two conditions imply
$\mathcal I_\eps = \langle  \Psi_\eps^1, \dots,  \Psi_\eps^n \rangle.$

This is \cite[Theorem 1.8]{MRST}:

\begin{theorem}
\label{thmgci}
Let $(\I_\eps)$ be a family of ideals satifying the uniform complete
intersection condition, set $S_\eps=V(\I_\eps)$ and
$\I=\langle \Psi^1,\dots,\Psi^n\rangle$.  Then 
\begin{enumerate}
\item
$\lim\limits_{\eps\to 0} \mathcal I_\eps = \mathcal I$,
\item
$\lim\limits_{\eps\to 0} G_\eps = G_{\mathcal I}$,
and the convergence is locally uniform on $\Omega \setminus \{0\}$.
\end{enumerate}
\end{theorem}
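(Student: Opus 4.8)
The plan is to prove the two assertions in turn: first the convergence of ideals $\lim_\eps \mathcal{I}_\eps = \mathcal{I}$, for which I would feed the complete intersection structure into the finite-dimensional machinery of Proposition \ref{pro080612_02} and Theorem \ref{limsupinf}; and then the convergence of Green functions, which, given the first part, I would obtain either as an immediate consequence of Theorem \ref{thmiff} or, more honestly, by a direct two-sided estimate built from the uniform convergence $\Psi_\eps \to \Psi$.

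For the ideal convergence, I would first record the easy inclusion $\mathcal{I} \subseteq \liminf_\eps \mathcal{I}_\eps$: since each generator $\Psi^i$ is the uniform limit of $\Psi_\eps^i \in \mathcal{I}_\eps$ (condition (3) of Definition \ref{defgci}) and $\liminf_\eps \mathcal{I}_\eps$ is an ideal, it contains $\mathcal{I} = \langle \Psi^1, \dots, \Psi^n\rangle$. Next I would compute $\ell(\mathcal{I}) = N$: because $\Psi = \Psi_0$ is proper onto its image with finite fibers, $\ell(\mathcal{I}) = \dim_\C \mathcal{O}/\mathcal{I}$ equals the total zero-multiplicity (local degree) of $\Psi$; condition (2) forces each $a_j^\eps$ to be a simple zero of $\Psi_\eps$, so $\Psi_\eps$ has total multiplicity $N$, and since $\Psi_\eps \to \Psi$ uniformly with properness preventing zeros from escaping to $\partial\Omega$, the degree is preserved in the limit, giving $\ell(\mathcal{I}) = N$. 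Then the paper's own tools take over: writing $L_\eps := \Phi_\eps(\mathcal{I}_\eps/\mathcal{J}_\eps) \subseteq \C^d$, one has $\dim L_\eps = d - N$, and the elementary bound $\dim \liminf_\eps L_\eps \le \dim L_\eps$ (extract a convergent frame, as in the proof of Lemma \ref{lem080612_01}) together with Proposition \ref{pro080612_02}(2) yields $\ell(\liminf_\eps \mathcal{I}_\eps) \ge N$. Combined with $\mathcal{I} \subseteq \liminf_\eps \mathcal{I}_\eps$ and $\ell(\mathcal{I}) = N$, this squeezes $\ell(\liminf_\eps\mathcal{I}_\eps) = N$ and forces $\liminf_\eps \mathcal{I}_\eps = \mathcal{I}$; Theorem \ref{limsupinf}(ii) then promotes this to $\lim_\eps\mathcal{I}_\eps = \mathcal{I}$.

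For the Green functions, the quickest route is to observe that $\mathcal{I} = \langle \Psi^1, \dots, \Psi^n\rangle$ is by definition a complete intersection ideal, so once $\lim_\eps \mathcal{I}_\eps = \mathcal{I}$ is known, Theorem \ref{thmiff} gives at once that $G_\eps \to G_\mathcal{I}$ locally uniformly on $\Omega\setminus\{0\}$. Since that reduction is logically delicate (the equivalence may itself rest on the present statement), I would also give the direct argument as an upper and a lower bound. For the upper bound, the family $(G_\eps)$ is negative and plurisubharmonic, hence, by the compactness supplied by \cite{Ra-Th} (cf. the Introduction), any subsequence has an $L^1_{loc}$-limit $G_*$ which is uniform on compacta of $\overline\Omega\setminus\{0\}$; since $G_\eps \le \max_i \log|\Psi_\eps^i| + O(1)$ and $\Psi_\eps \to \Psi$ uniformly, the limit satisfies $G_* \le \max_i \log|\Psi^i| + O(1)$, so $G_*$ is an admissible competitor in Definition \ref{greenideal} and $G_* \le G_\mathcal{I}$, whence $\limsup_\eps G_\eps \le G_\mathcal{I}$. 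For the lower bound I would take any competitor $u$ for $G_\mathcal{I}$ and manufacture competitors $u_\eps$ for $G_\eps$ converging to $u$, by gluing $u$ with a suitably normalized multiple of $\log\|\Psi_\eps\|$ so as to respect the logarithmic poles at each $a_j^\eps$; passing to the limit gives $u \le \liminf_\eps G_\eps$, and taking the supremum over $u$ gives $G_\mathcal{I} \le \liminf_\eps G_\eps$.

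The main obstacle, in the direct approach, is precisely this competitor construction for the lower bound: condition (2) only controls $\log\|\Psi_\eps\| - \log\|z-a_j^\eps\|$ by a constant $C(\eps)$ that may blow up as $\eps\to 0$, so the logarithmic singularities of the colliding poles must be glued together uniformly while keeping the resulting $u_\eps$ globally plurisubharmonic, negative, and of the correct pole strength. In the ideal part, by contrast, the one genuinely analytic input is the degree-preservation argument giving $\ell(\mathcal{I}) = N$, and everything after it is formal. This is why the shortcut through Theorem \ref{thmiff} is attractive, and why the uniform convergence in condition (3), rather than mere pointwise control near each pole, is indispensable.
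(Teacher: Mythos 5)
The paper gives no proof of this statement: Theorem \ref{thmgci} is imported verbatim as Theorem 1.8 of \cite{MRST} (the sentence immediately preceding it reads ``This is \cite[Theorem 1.8]{MRST}''), so there is nothing internal to compare you against, and your attempt has to be judged as a reconstruction. Your argument for part (1) is correct and is in fact a genuinely different route from the source: the inclusion $\I\subseteq\liminf_\eps\I_\eps$ from condition (3), the identity $\ell(\I)=N$ by conservation of the local degree (condition (2) gives Monge--Amp\`ere mass $1$ at each $a_j^\eps$ by the comparison theorem, properness of $\Psi_0$ keeps zeros from reaching $\partial\Omega$, hence $\deg_0\Psi=N=\dim_\C\mathcal O_0/\I$ for a complete intersection), the bound $\ell(\liminf_\eps\I_\eps)\ge N$ via Proposition \ref{pro080612_02}(2) and the frame-extraction argument of Lemma \ref{lem080612_01}, and then the squeeze plus Theorem \ref{limsupinf}(ii). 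Every step is sound, and what this buys is a proof of the ideal half that leans on the present paper's own Section 3 machinery rather than on \cite{MRST}; you correctly isolate the degree computation as the one non-formal input.

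Part (2) is where the proposal is thinner. Quoting Theorem \ref{thmiff} is admissible inside this paper (the authors themselves use it as a black box, e.g.\ in the proof of Theorem \ref{theo_02_chap_04}), but your suspicion of circularity is well founded: in \cite{MRST} the ``if'' direction of Theorem 1.11 is obtained by essentially the same competitor construction that proves Theorem 1.8, so as a self-contained argument the reduction proves nothing. In the direct argument, both halves have uniformity issues you only partially address. For the upper bound, $G_\eps\le\max_i\log|\Psi_\eps^i|+O(1)$ holds with a constant a priori depending on $\eps$, so passing to the limit requires an argument (this is the content of the ``easy'' inequality $\limsup_\eps G_{\I_\eps}\le G_{\limsup_\eps\I_\eps}$ in \cite{MRST}, which you could cite instead). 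For the lower bound, the natural global competitor is $\log\|\Psi_\eps\|-\sup_\Omega\log\|\Psi_\eps\|$, which condition (2) makes admissible for $G_\eps$ and condition (3) makes convergent; upgrading this to $G_{\I}\le\liminf_\eps G_\eps$ by gluing an arbitrary competitor $u$ for $G_\I$ with $(1+\eta)\bigl(\log\|\Psi_\eps\|-b_r\bigr)$ on a small ball is exactly the step you name as the obstacle and do not carry out. So: part (1) is a complete and attractive alternative proof; part (2) is complete only at the level of citation, and its direct version has a real, though honestly flagged, gap.
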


\subsection{Proof of Theorem \ref{theo_ICU_chap_4}}
\ {}

Let $l_{ij}^\varepsilon, 1 \leqslant i < j \leqslant 4$ be the (normalized) equations 
of the lines through $a_i^\varepsilon, a_j^\varepsilon$ and $l_{ij} := \underset{\varepsilon \to 0}{\lim} l_{ij}^\varepsilon, 1 \leqslant i < j \leqslant 4$. Set
\begin{equation*}
\mathcal{L}^\varepsilon := \left\{f_1^\varepsilon := l_{12}^\varepsilon.l_{34}^\varepsilon; f_2^\varepsilon := l_{13}^\varepsilon.l_{24}^\varepsilon; f_3^\varepsilon := l_{14}^\varepsilon.l_{23}^\varepsilon\right\} \subset \mathcal{I}(S_\varepsilon),
\end{equation*}
and $f_j := \underset{\varepsilon \to 0}{\lim}f_j^\varepsilon$, $j = 1, 2, 3$.  

We will prove that under the hypotheses of the theorem, 
there exists $i\neq j \in \{1,2,3\}$ such that if 
$\Psi_0 := \big(f_i, f_j\big)$, then $\Psi_0^{-1}(0) = \{0\}$.  (One can see that
the hypotheses are necessary for this to happen \cite[Remarque 4.1.2, p. 66]{Hai}).  
Then we conclude using Theorem \ref{thmgci}
with $\Psi_\eps:= f_i^\varepsilon f_j^\varepsilon$.  Notice that since $\Psi_0$
is homogeneous of degree $2$ and $\| \Psi_0\|$ is bounded and bounded away from $0$
on the unit sphere, then $\log \| \Psi_0\| = \log \|z\|^2 + O(1)$, and the
same estimate holds for $G_{\mathcal I}$.  An application of the 
generalized maximum principle of Rashkovskii and Sigurdsson \cite[Lemma 4.1]{Ra-Si} 
shows that the limit does not depend on the particular value of $\| \Psi_0\|$: there
is only one maximal plurisubharmonic function with boundary values $0$ on
$\partial \Omega$ and a singularity equivalent to $ \log \|z\|^2$.

We proceed with the proof that we can find an ``independent" pair of $f_i$'s.
\vskip.3cm

{\bf Case 1}: For any three point subset $\Tilde{S}_\varepsilon \subset S_\varepsilon$, 
the set of limit directions satisfies $\#\Tilde{\mathcal{D}} = 3$. So whenever 
$\{i,j\}$ and $\{i',j'\}$ have an element in common, $l_{ij}$ is independent from $l_{i'j'}$
and so for any $1\le k< k'\le 3$, $f_k$ and $f_{k'}$ have no common factor. So $\Psi_0^{-1}(0) = \{0\}$.
\vskip.3cm

{\bf Case 2}: Suppose that there exists a three point subset 
$S'_\varepsilon \subset S_\varepsilon$ such that the set $\mathcal{D}' $ of limit directions 
satisfies $\#\mathcal{D}' = 2$. 
Without loss of generality, $S'_\varepsilon = \{a_1^\varepsilon, a_2^\varepsilon, a_3^\varepsilon\} \subset S_\varepsilon$. 

Write $v_{ij}$ for the direction in $\mathbb P^1$ defined by $l_{ij}$. 
With our hypothesis, we may assume
$v_{23} = v_{12} \not= v_{13}$. 
It will be convenient to write
\begin{equation*}
\begin{aligned}
A_1 := \{v_{13}, v_{24}\} \cap \{v_{12}, v_{34}\},\\
A_2 := \{v_{13}, v_{24}\} \cap \{v_{14}, v_{23}\},\\
A_3 := \{v_{12}, v_{34}\} \cap \{v_{14}, v_{23}\}.
\end{aligned}
\end{equation*}
So here $A_3 \not= \emptyset$. We will show that there exists
$p \in \{1, 2\}$ such that $A_p = \emptyset$ (and thus the corresponding couple 
of function $f_i$ will be without a common factor, and the proof concluded). 

Suppose $A_1 \not= \emptyset$. Since $v_{23} = v_{12} \not= v_{13}$, by
\eqref{condition20092012},  $v_{12} \not= v_{24}$. 
Consequently,  $v_{34}\in\{ v_{13}, v_{24}\}$. 

We study $A_2$. Since $v_{23}= v_{12} \not= v_{24}$, $v_{23} \notin \{v_{13}, v_{24}\}$.  
So we need to study $v_{14}$.
\vskip.3cm

{\bf Case 2.1}: $v_{34} = v_{13}$. 

Then (\ref{condition21112011}) implies that $v_{14} \not= v_{13}=v_{34}$. 
We will see that $v_{14} = v_{24}$ is impossible.
For this, we need to take some coordinates. 

Using  translations, we may assume $a_1^\varepsilon = 0 \in \mathbb{D}^2$, for any $\varepsilon$. 
Choose vectors $\Tilde{v}_{ij} \in \mathbb{C}^2$ such that $||\Tilde{v}_{ij}|| = 1$ 
and $[\Tilde{v}_{ij}] = v_{ij} \in \mathbb{P}^1\mathbb{C}, 1 \leqslant i < j \leqslant 4$. 
Since $v_{23} = v_{12} \not= v_{13}$, we can choose an invertible linear map $\Phi$ 
such that $[\Phi(\Tilde{v}_{12})] = [1: 0]$, $[\Phi(\Tilde{v}_{13})] = [0: 1]$. 
So we can study $\Phi(S_\varepsilon)$, 
where 
\begin{multline*}
\Phi(a_1^\varepsilon) = b_1^\varepsilon= (0,0), \; \Phi(a_2^\varepsilon) = b_2^\varepsilon = (\rho_2(\varepsilon), \eta_2(\varepsilon)), 
\\
\Phi(a_3^\varepsilon) = b_3^\varepsilon = (\eta_3(\varepsilon), \rho_3(\varepsilon)), 
\; \Phi(a_4^\varepsilon) = b_4^\varepsilon = (\alpha(\varepsilon), \beta(\varepsilon))
\end{multline*}
in which all coordinates tend to $0$ and $\underset{\varepsilon \to 0}{\lim}\; \eta_j(\varepsilon) / \rho_j(\varepsilon) = 0, j = 2, 3$. 
We retain the notation $v_{ij} \in \mathbb{P}^1\mathbb{C}, 1 \leqslant i < j \leqslant 4$, and
 $v_{ij}:=\lim_\eps v_{ij}^\varepsilon$ where this last
 is the direction of the line through $b_i^\varepsilon$ and $b_j^\varepsilon$. Let
 \begin{equation*}
\gamma(\varepsilon) := \dfrac{\rho_3(\varepsilon) - \eta_2(\varepsilon)}{\eta_3(\varepsilon) - \rho_2(\varepsilon)},
\end{equation*}
then $v_{23}^\varepsilon = [1:\gamma(\varepsilon)]$. Since $v_{23} = v_{12} = [1:0]$, 
 $\underset{\varepsilon \to 0}{\lim}\; \gamma(\varepsilon) = 0$. Thus
\begin{equation*}
\underset{\varepsilon \to 0}{\lim}\; 
\frac{\rho_3(\varepsilon)}{\rho_2(\varepsilon)}  
= \underset{\varepsilon \to 0}{\lim} \frac{\gamma(\varepsilon) - \frac{\eta_2(\varepsilon)}{ \rho_2(\varepsilon)}}
{\gamma(\varepsilon)\cdot \frac{\eta_3(\varepsilon)}{\rho_3(\varepsilon) } - 1} = 0.
\end{equation*}

Assume now that $v_{14} = v_{24}$. Then 
$[1:0] = v_{12} \not= v_{14} = v_{24} \not= v_{34} = [0:1]$. 
Write $v_{14} =[1:\ell]$, i.e. $\beta/\alpha \to \ell \not= 0, \infty$. 
Consider $\rho_2/\alpha$. If $ \|\rho_2/\alpha\| \leqslant C_2 < \infty$, as $\varepsilon \to 0$ 
(or even along a subsequence $\varepsilon_k\to 0$), then
\begin{equation*}
\frac{\alpha - \eta_3}{\beta - \rho_3} = 
\frac{1 - \frac{\eta_3}{\rho_3}\cdot \frac{\rho_3}{\rho_2}\cdot \frac{\rho_2}{\alpha}}{\frac\beta\alpha - \frac{\rho_3}{\rho_2}\cdot \frac{\rho_2}\alpha}
 \to \ell \not= 0, \; \mbox{ as }\; \varepsilon \to 0.
\end{equation*}
This contradicts $
\lim_{\varepsilon\to0} [\alpha - \eta_3:\beta - \rho_3]=v_{34} = v_{13} = [0:1]$. 
Therefore we have $\alpha/\rho_2 \to 0$, so
\begin{equation*}
\dfrac{\beta - \eta_2}{\alpha - \rho_2} = 
\frac{\frac\beta\alpha\cdot \frac\alpha{\rho_2} - \frac{\eta_2}{\rho_2}}
{\frac\alpha{\rho_2} - 1} \to 0, \; \mbox{as}\; \varepsilon \to 0.
\end{equation*}
This contradicts $\lim_{\varepsilon\to0} [\alpha - \rho_2:\beta - \eta_2]=v_{24} = v_{14} \not= v_{12} = [1:0]$. This is the contradiction we sought.
\vskip.3cm

{\bf Case 2.2}: $v_{34} = v_{24}$.

In an analogous way, we will see that 
$A_2 =   \emptyset$. 
We still have $v_{23} \notin \{v_{13}, v_{24}\}$. 
By condition  (\ref{condition20092012}), 
on a $v_{14} \not= v_{24} = v_{34}$. We still use the coordinates above.

Suppose that $v_{14} = v_{13} = [0:1]$. 
This implies $\alpha/\beta \to 0$. If $0 < \|\rho_2/\beta\| \leqslant C_4 < \infty$ as 
$\varepsilon \to 0$, 
\begin{equation*}
\dfrac{\alpha - \eta_3}{\beta - \rho_3} = \dfrac{\alpha/\beta - \eta_3/\rho_3. \rho_3/\rho_2. \rho_2/\beta}{1 - \rho_3/\rho_2. \rho_2/\beta} \to 0, \; \mbox{as}\; \varepsilon \to 0.
\end{equation*}
This contradicts $v_{34} = v_{24} \not= v_{23} = [0:1]$. Thus $\beta/\rho_2 \to 0$,
therefore
\begin{equation*}
\dfrac{\beta - \eta_2}{\alpha - \rho_2} = \dfrac{\beta/\rho_2 - \eta_2/\rho_2}{\alpha/\beta .\beta/\rho_2 - 1} \to 0, \; \mbox{as}\; \varepsilon \to 0.
\end{equation*}
This contradicts $v_{24} = v_{34} \not= v_{23} = [1:0]$. So $v_{14} \not= v_{13}$. 

\vskip.3cm

In a similar way, we can prove that if $A_2 \not= \emptyset$, then $A_1 = \emptyset$.
\hfill $\square$
\vskip.3cm
To finish the proof of Theorem \ref{theo_ICU_chap_4}, we need to prove the statements about the
limit ideal.  General properties of convergence show that $\ell (\mathcal I)=4$ and
the form of the generators show that $\mathcal I \subset \frak M_0^2$. It remains to
prove that $\mathcal I \supset \frak M_0^3$, which is a consequence of a more general fact.

\begin{prop}\label{pro1201}
Suppose that all the directions in $\mathcal{D}(S_\varepsilon)$ admit a limit,
and that  $\#\mathcal{D} \geqslant 2$. Then
 $\mathfrak{M}_0^3 \subset \underset{\varepsilon \to 0}{\lim}\inf \mathcal{I}_\varepsilon$. Furthermore,
$$
\limsup_\eps \mathcal{I}_\varepsilon \subset \mathfrak{M}_0^2.
$$
\end{prop}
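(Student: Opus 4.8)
The plan is to treat the two inclusions separately; the second one is quicker, so I would dispatch it first.

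For $\limsup_\eps \I_\eps \subset \mathfrak{M}_0^2$, it suffices (since $\mathfrak{M}_0^2$ is an ideal) to check that every generator $f$ of $\limsup_\eps \I_\eps$ lies in $\mathfrak{M}_0^2$. Such an $f$ is a local-uniform limit $f = \lim_j f_j$ with $f_j \in \I_{\eps_j}$ and $\eps_j \to 0$. Each $f_j$ vanishes at the four points $a_k^{\eps_j}$, all tending to $0$, so $f(0) = \lim_j f_j(a_1^{\eps_j}) = 0$ and $f \in \mathfrak{M}_0$. For the linear part I would fix a pair $k \neq l$ and use $f_j(a_k^{\eps_j}) = f_j(a_l^{\eps_j}) = 0$ together with a first-order expansion of $f_j$ along the segment $[a_k^{\eps_j}, a_l^{\eps_j}]$: dividing by $\|a_l^{\eps_j} - a_k^{\eps_j}\|$ and letting $j \to \infty$ (the gradients $\nabla f_j$ converge locally uniformly because the $f_j$ do, and $a_k^{\eps_j} \to 0$) yields $df(0)\cdot \tilde v_{kl} = 0$, where $\tilde v_{kl}$ is a unit representative of $v_{kl}$. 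Since $\#\mathcal D \geq 2$, two of the directions $v_{kl}$ are distinct, hence linearly independent in $\C^2$, which forces $df(0) = 0$. Combined with $f(0)=0$ this gives $f \in \mathfrak{M}_0^2$.

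For $\mathfrak{M}_0^3 \subset \liminf_\eps \I_\eps$, the key observation is that any product of three of the affine forms $l_{ij}^\eps$ whose index pairs cover all four points belongs to $\I_\eps$; e.g.\ $l_{12}^\eps l_{13}^\eps l_{14}^\eps \in \I_\eps$, and likewise $\ell \cdot l_{ij}^\eps l_{kl}^\eps \in \I_\eps$ for any fixed homogeneous linear form $\ell$ and complementary pair $\{i,j\},\{k,l\}$. Since each $l_{ij}^\eps \to l_{ij}$, a homogeneous linear form in the direction $v_{ij}$ (the constant term vanishes as the poles tend to $0$), passing to the limit shows that $\liminf_\eps \I_\eps$ contains the homogeneous cubics $\ell f_m$ ($m=1,2,3$, $\ell$ arbitrary) as well as the ``cone'' cubics $l_{ij}l_{ik}l_{il}$. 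As $\liminf_\eps \I_\eps$ is an ideal and $\mathfrak{M}_0^3$ is generated by the degree-$3$ homogeneous polynomials, it suffices to show that the cubics so produced span the full four-dimensional space of homogeneous polynomials of degree $3$. If some pair $f_p, f_q$ among $f_1 = l_{12}l_{34}$, $f_2 = l_{13}l_{24}$, $f_3 = l_{14}l_{23}$ is coprime, then $\mathfrak{M}_0 f_p$ and $\mathfrak{M}_0 f_q$ meet only at $0$ in degree $3$ (a common multiple would be divisible by $f_p f_q$, of degree $4$), so their degree-$3$ parts already fill the whole space and we are done.

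The main obstacle is therefore the fully degenerate configuration in which every pair $\{f_p,f_q\}$ shares a common linear factor, i.e.\ the direction sets $\{v_{12},v_{34}\}$, $\{v_{13},v_{24}\}$, $\{v_{14},v_{23}\}$ intersect pairwise. Here I would run a short finite case analysis on which of the $v_{ij}$ coincide, using $\#\mathcal D \ge 2$ to guarantee that at least two distinct directions occur, and bring in the cone cubics $l_{ij}l_{ik}l_{il}$ to supply the missing spanning vectors; in each arrangement one exhibits four of the available homogeneous cubics that are linearly independent. This bookkeeping — verifying spanning in each degenerate configuration of the six directions — is the only delicate part, the rest of the argument being formal.
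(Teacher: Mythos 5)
Your argument for $\limsup_\eps \mathcal{I}_\eps \subset \mathfrak{M}_0^2$ is correct, and it takes a genuinely different route from the paper's: the paper observes that $\mathcal{I}_\eps \subset \mathcal{I}(\{a_1^\eps,a_2^\eps,a_3^\eps\})$ for a three-point subset carrying two distinct limit directions and then quotes the three-point convergence theorem of \cite{MRST}, whereas you rederive the vanishing of $f(0)$ and $df(0)$ directly from divided differences along two independent limit directions. Yours is more self-contained (modulo passing to a subsequence so that the unit representatives $\tilde v_{kl}^{\eps_j}$ themselves, and not only their classes in $\P^1\C$, converge); the paper's is shorter but imports an external result.

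The $\liminf$ half, however, has a genuine gap, located precisely in the step you defer as ``bookkeeping''. Your toolbox consists exclusively of cubics built from the lines $l_{ij}$ through \emph{pairs} of poles, and under the sole hypothesis $\#\mathcal{D}\ge 2$ these need not span the $4$-dimensional space of homogeneous cubics. Take $S_\eps=\{(0,0),(\eps,0),(2\eps,0),(0,\eps^2)\}$: all six directions converge, $v_{14}=[0:1]$ while the five others equal $[1:0]$ (e.g. $v_{24}^\eps=[-\eps:\eps^2]\to[1:0]$), so $\#\mathcal{D}=2$ and the Proposition applies. Here $l_{14}\to z_1$ and every other $l_{ij}\to z_2$ (up to unimodular constants). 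Any product of $l_{ij}$'s whose index pairs cover $\{1,2,3,4\}$ must contain a factor with an index in $\{2,3\}$, hence a factor tending to $z_2$; so every cubic your construction produces --- the cone cubics $l_{ij}l_{ik}l_{il}$ as well as the $\ell f_m$ (here $f_1,f_2\to z_2^2$ and $f_3\to z_1z_2$) --- lies in $z_2\cdot\{\text{quadratics}\}$, and $z_1^3$ is never attained. The span is only $3$-dimensional, so the case analysis you announce cannot be completed with these ingredients, even though $z_1^3$ does belong to $\liminf_\eps\mathcal{I}_\eps$.

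The paper closes this hole by enlarging the toolbox. After normalizing so that $a_1^\eps=(0,0)$ and $v_{12}=[1:0]\neq v_{13}=[0:1]$ (possible because $\#\mathcal{D}\ge2$ forces some vertex to see two distinct directions), it uses, in addition to the lines through $a_1^\eps a_2^\eps$ and $a_1^\eps a_3^\eps$, the ``coordinate lines'' $z_1-x_k(\eps)$ and $z_2-y_k(\eps)$ through a \emph{single} pole $a_k^\eps=(x_k(\eps),y_k(\eps))$: these vanish at one pole each and converge to $z_1$ or $z_2$ at will, independently of the configuration of directions. For instance $\big(z_1-\tfrac{\delta_3}{\rho_3}z_2\big)\big(z_1-\rho_2\big)\big(z_1-x_4\big)\in\mathcal{I}_\eps$ converges to $z_1^3$, and the analogous products yield $z_1^2z_2$, $z_1z_2^2$, $z_2^3$. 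If you add such one-point factors to your list of admissible linear factors, your limiting argument goes through uniformly and the delicate configuration analysis disappears.
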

\begin{proof}
$\mathfrak{M}_0^3$ is invariant under invertible linear maps. 
Since $\#\mathcal{D} \geqslant 2$, there exists $i \in \{1, 2, 3, 4\}$ such that $v_{ik} \not= v_{ik'}$, with $k \not= k'$ and $k, k' \in \{1, 2, 3, 4\} \backslash \{i\}$; otherwise it is easy to show that
all directions are equal, in contradiction with the hypothesis.

Without loss of generality, assume $v_{12}\neq v_{13}$
and after a linear transformation, $v_{12} = [1:0], v_{13} = [0:1]$. 

We reduce ourselves by translations to the case $a_1^\varepsilon =(0,0)$.
Let $a_2^\varepsilon  = (\rho_2(\varepsilon), \delta_2(\varepsilon))$ and $a_3^\varepsilon - a_1^\varepsilon = (\delta_3(\varepsilon), \rho_3(\varepsilon))$, where $\delta_j(\varepsilon) = o(\rho_j(\varepsilon))$, $j = 2, 3$. Let $a_4^\varepsilon  = (x_4(\varepsilon), y_4(\varepsilon))$ 
tending to $(0,ˆ)$. 
For any $\varepsilon$, set 
\begin{equation*}
\begin{aligned}
\psi_1^\varepsilon &:= \big[z_1 - x_1(\varepsilon) - \dfrac{\delta_3(\varepsilon)}{\rho_3(\varepsilon)}(z_2 - x_2(\varepsilon))\big]\big[z_1- x_1(\varepsilon) - \rho_2(\varepsilon)\big]
\big[z_1- x_1(\varepsilon) - x_4(\varepsilon)],
\\
\psi_2^\varepsilon &:=\big[z_1 - x_1(\varepsilon) - \dfrac{\delta_3(\varepsilon)}{\rho_3(\varepsilon)}(z_2 - x_2(\varepsilon))\big]\big[z_1- x_1(\varepsilon) - \rho_2(\varepsilon)\big]
\big[z_2 - x_2(\varepsilon) - y_4(\varepsilon)\big],
\\
\psi_3^\varepsilon &:=\big[z_1 - x_1(\varepsilon) - \dfrac{\delta_3(\varepsilon)}{\rho_3(\varepsilon)}(z_2 - x_2(\varepsilon))\big]\big[z_2 - x_2(\varepsilon) - \dfrac{\delta_2(\varepsilon)}{\rho_2(\varepsilon)}(z_1 - x_1(\varepsilon))\big]
\big[z_2 - x_2(\varepsilon) - y_4(\varepsilon)\big]
\\
\psi_4^\varepsilon &:= \big[z_2 - x_2(\varepsilon) - \dfrac{\delta_2(\varepsilon)}{\rho_2(\varepsilon)}(z_1 - x_1(\varepsilon))\big]\big[z_2- x_2(\varepsilon) - \rho_3(\varepsilon)\big]
\big[z_2 - x_2(\varepsilon) - y_4(\varepsilon)\big].
\end{aligned}
\end{equation*}
Then $\psi_j^\varepsilon \in \mathcal{I}_\varepsilon, 1 \leqslant j \leqslant 4$, 
and, with uniform convergence on compacta of $\Omega$,
\begin{equation*}
\begin{aligned}
z_1^3 &= \underset{\varepsilon \to 0}{\lim} \psi_1^\varepsilon \in \underset{\varepsilon \to 0}{\lim}\inf \mathcal{I}_\varepsilon\\
z_1^2z_2 &= \underset{\varepsilon \to 0}{\lim} \psi_2^\varepsilon \in \underset{\varepsilon \to 0}{\lim}\inf \mathcal{I}_\varepsilon\\
z_1z_2^2 &= \underset{\varepsilon \to 0}{\lim} \psi_3^\varepsilon \in \underset{\varepsilon \to 0}{\lim}\inf \mathcal{I}_\varepsilon\\
z_2^3 &= \underset{\varepsilon \to 0}{\lim} \psi_4^\varepsilon \in \underset{\varepsilon \to 0}{\lim}\inf \mathcal{I}_\varepsilon.
\end{aligned}
\end{equation*}
Thus $\mathfrak{M}_0^3 =\left\langle z_1^3, z_1^2 z_2, z_1 z_2^2, z_2^3 \right\rangle \subset \underset{\varepsilon \to 0}{\lim}\inf \mathcal{I}_\varepsilon$.

To get the other inclusion, we make the same normalizations (using the fact that 
$\mathfrak{M}_0^2$ is invariant under invertible linear transformations, too). 
Write $\Tilde{S}_\varepsilon = \{a_1^\varepsilon, a_2^\varepsilon, a_3^\varepsilon\}$. 
By   \cite[Theorem 1.12, i]{MRST}, 
$\underset{\varepsilon \to 0}{\lim}\; \mathcal{I}\big(\Tilde{S}_\varepsilon\big) = \mathfrak{M}_0^2$. 
Since $\mathcal{I}_\varepsilon \subset \mathcal{I}\big(\Tilde{S}_\varepsilon\big)$,$
\underset{\varepsilon \rightarrow 0}{\lim\sup}\;\mathcal{I}_\varepsilon \subset \underset{\varepsilon \rightarrow 0}{\lim\sup}\;\mathcal{I}\big(\Tilde{S}_\varepsilon\big) = \mathfrak{M}_0^2.$
\end{proof}

\subsection{Proof of Theorem \ref{theo_02_chap_04}}

The fact that the limit inferior of the Green functions is greater than the Green
function of the ideal, but not equal to it, follows from Theorem \ref{thmiff} 
since here $\mathcal I_0$ has $3$ generators. 

{\bf Remark.} It would be desirable to have a better estimate of the limits of Green functions.
Some explicit computations were carried out in \cite[Section 4.3]{Hai}, using the methods
from \cite{Ra-Th}.  It concerned the family of poles given by 
$S_\varepsilon := \{(0;0), (\varepsilon; 0), (0; \varepsilon), (\gamma\varepsilon;0)\}$, 
with $\gamma \not= 1$.  Since the family is homogeneous in $\epsilon$, in 
particular is given by a hyperplane section of a (singular) holomorphic curve,
\cite[Example 5.8]{Ra-Th} shows that the limit of the Green functions does exist.

The following estimates are obtained:
\begin{enumerate}
\item
$\underset{\varepsilon \to 0}{\lim}\; G_{\mathcal{I}_\varepsilon}(z) \geqslant 2\log \|z\| + O(1)$, for  $z_2\neq 0$;
\item
$\underset{\varepsilon \to 0}{\lim}\; G_{\mathcal{I}_\varepsilon}(z) \geqslant \frac{5}{3}\log \|z\| + O(1)$, for $z_1z_2^2(z_1 + z_2)(z_1 + \gamma z_2)\neq 0$.
\end{enumerate}
This is far from a complete answer, even in this case, but the computations involved 
are getting increasingly tedious.

\vskip.3cm
We now proceed with the proof of convergence of the family of ideals.

As before, we may assume $a_1^\varepsilon = 0 \in \Omega$. Since $\#\Tilde{\mathcal{D}} \geqslant 2$, for any three-point set $\Tilde{S}_\varepsilon \subset S_\varepsilon$, 
 $\# \mathcal{D} \geqslant 2$.
Without loss of generality, assume $v_{12} \not= v_{13}$. By (\ref{eq_15112012_02}), 
we may assume that for $i = 2$, $v_{12} = v_{23} = v_{24}$. 

Then we claim that $\# \mathcal{D} \geqslant 3$. Indeed, if we had $\# \mathcal{D} = 2$, then $\mathcal{D} =\{v_{12} , v_{13}\}$.
Three cases may occur.

$\bullet$) If $v_{14} = v_{12}$, then $v_{12} = v_{14} = v_{24}$. This contradicts \eqref{condition21112011}.

$\bullet$) If $v_{34} = v_{12}$, then $v_{23} = v_{34} = v_{24}$. This contradicts \eqref{condition21112011}.

$\bullet$) Si $v_{14} = v_{34} = v_{13}$, then the this contradicts \eqref{condition21112011}.

This proves the claim. 

We can chose an invertible linear map $\Phi : \mathbb{C}^2 \rightarrow \mathbb{C}^2$ such that
\begin{equation*}
[\Phi(\Tilde{v}_{12})] = [1:0]\; \mbox{and} \; [\Phi(\Tilde{v}_{13})] = [0:1],
\end{equation*}
where $\Tilde{v}_{12}, \Tilde{v}_{13} \in \mathbb{C}^2$ are chosen so that $\|\Tilde{v}_{12}\| = \|\Tilde{v}_{13}\| = 1$ and
$[\Tilde{v}_{12}] = v_{12}$, $[\Tilde{v}_{13}] = v_{13}$. Then
\begin{equation*}
\Phi\big(S_\varepsilon\big) = S'_\varepsilon = \{b_1^\varepsilon = (0, 0), b_2^\varepsilon, b_3^\varepsilon, b_4^\varepsilon\}.
\end{equation*}
For this new system $v_{12} = [1:0] \not= v_{13} = [0:1]$. We can choose $l_{ij}^\varepsilon(z)$,
normalized equations of the lines through the pairs of
points $b_i^\varepsilon$ and $b_j^\varepsilon$, $1 \leqslant i < j \leqslant 4$
such that $\underset{\varepsilon \to 0}{\lim}\;l_{12}^\varepsilon(z) = \underset{\varepsilon \to 0}{\lim}\;l_{23}^\varepsilon(z) = \underset{\varepsilon \to 0}{\lim}\;l_{24}^\varepsilon(z) = z_2$ 
and $\underset{\varepsilon \to 0}{\lim}\;l_{13}^\varepsilon(z) = z_1$. This implies
\begin{equation*}
z_1z_2 = \underset{\varepsilon \to 0}{\lim}\; l_{13}^\varepsilon(z) l_{24}^\varepsilon(z)
 \in \liminf_\eps\mathcal{I}_\eps,
\end{equation*}
\begin{equation*}
z_1^3 = \underset{\varepsilon \to 0}{\lim}\; l_{13}^\varepsilon(z) \big[z_1 - z_1(b_2^\varepsilon)\big]\big[z_1 - z_1(b_4^\varepsilon)\big] \in \liminf_\eps\mathcal{I}_\eps.
\end{equation*}
So $\left\langle z_1z_2, z_1^3 \right\rangle \subset \mathcal{I}_\ast$. 

Since $\# \mathcal{D} \geqslant 3$, there exists $(i,j) \in \{ (1,3), (1,4), (3,4)\}$ 
such that $v_{ij}^\varepsilon \to [1:t]$, 
with $t \not= 0, \infty$. So $\underset{\varepsilon \to 0}{\lim}\; l_{ij}^\varepsilon(z) = z_2 - t z_1 \in \liminf_\eps\mathcal{I}_\eps$. This implies
\begin{equation*}
z_2^2 = \underset{\varepsilon \to 0}{\lim}\; \big(l_{ij}^\varepsilon(z) l_{km}^\varepsilon(z) + t l_{13}^\varepsilon(z) l_{24}^\varepsilon(z)\big) \in \liminf_\eps\mathcal{I}_\eps,
\end{equation*}
since $2 \in \{k, m \} = \{1,2,3,4\} \setminus \{i,j\}$. Thus $\mathcal{I}_0 := \left\langle z_1z_2, z_2^2, z_1^3\right\rangle \subset \liminf_\eps\mathcal{I}$, with $\ell\big(\mathcal{I}_0\big) = 4$. 
By Theorem \ref{limsupinf}, $\underset{\varepsilon \to 0}{\lim}\; \mathcal{I}\big(S_\varepsilon\big) = \mathcal{I}_0$.  \hfill $\square$

\subsection{Proof of Theorem \ref{pro11122012_01}}

By the hypothesis $\# \mathcal{D} \geqslant 2$, we may assume $v_{12} \not= v_{13}$.
 Just as in the proof of Theorem \ref{theo_02_chap_04}, we perform a translation
 to reduce ourselves to $a_1^\eps=(0,0)$, and we choose a linear map $\Phi$ so that 
 we are reduced to $v_{12} = [1:0] \not= v_{13} = [0:1]$.  We adopt the
 same notation $S'_\varepsilon=\{b_k^\varepsilon, 1\le k \le 4\}$.
 
Since there is a $3$ point subset $\tilde{S'}_\varepsilon \subset S'_\varepsilon$ 
such that  $\# \Tilde{\mathcal{D}}' = 1$, we may 
 assume that $\tilde S'_\eps=\{1,2,4\}$, so $v_{12} = v_{14} = v_{24} = [1:0]$.
 Again we may choose line equations so that $\underset{\varepsilon \to 0}{\lim}\;l_{12}^\varepsilon(z) = \underset{\varepsilon \to 0}{\lim}\;l_{14}^\varepsilon(z) = \underset{\varepsilon \to 0}{\lim}\;l_{24}^\varepsilon(z) = z_2$ 
and $\underset{\varepsilon \to 0}{\lim}\;l_{13}^\varepsilon(z) = z_1$. 

The proof in case (i) can then be completed exactly as the proof of Theorem \ref{theo_02_chap_04}
above.

\vskip.3cm
Case (ii):  $\# \mathcal{D} = 2$.

Either there exists $(i,j) \in \{(2,3), (3,4)\}$ such that $v_{ij}^\varepsilon \to v_{ij} = [1:0]$ or $v_{23} = v_{34} = [0:1]$.

Case (ii.1): there exists $(i,j) \in \{(2,3), (3,4)\}$ such that $v_{ij}^\varepsilon \to v_{ij} = [1:0]$.

Then $\underset{\varepsilon \to 0}{\lim}\; l_{ij}^\varepsilon(z) = z_2$. Then, again, 
\begin{equation*}
z_2^2 = \underset{\varepsilon \to 0}{\lim}\; l_{ij}^\varepsilon(z) l_{km}^\varepsilon(z) \in 
\liminf_\eps \mathcal{I}_\eps,
\end{equation*}
since $2 \in \{k, m \} = \{1,2,3,4\} \setminus \{i,j\}$. 
Again, as in the proof of Theorem \ref{theo_02_chap_04}, we find that  
$\underset{\varepsilon \to 0}{\lim}\; \mathcal{I}\big(S_\varepsilon\big) = \mathcal{I}_0$.

Case (ii.2): $v_{23} = v_{34} = [0:1]$. 

Thus $v_{13} = v_{23} = v_{34} = [0:1]$ and $v_{12} = v_{14} = v_{24} = [1:0]$. 

As in \cite{Du-Th}, where systems of three points tending to the origin
along a single direction are considered, 
we reparametrize  $\{b_1^\varepsilon,b_2^\varepsilon,b_4^\varepsilon\}$ in such a way that 
$|\varepsilon| = \|b_2^\varepsilon - b_1^\varepsilon\|$ 
and choose a coordinate system depending on $\eps$ such that 
$$
b_1^\varepsilon = (0, 0), b_2^\varepsilon = (\varepsilon, 0), b_4^\varepsilon 
= \big(\rho(\varepsilon), \delta(\varepsilon)\rho(\varepsilon)\big)
\mbox{ where }0 < |\rho(\varepsilon)| \leqslant \frac{1}{2}|\varepsilon|, \delta(\varepsilon) \to 0,
$$
as $\varepsilon \to 0$. 
Denote $b_3^\varepsilon = \big(\alpha(\varepsilon), \beta(\varepsilon)\big)$.
Since $v_{13}^\varepsilon = [\alpha(\varepsilon):\beta(\varepsilon)] \to [0:1]$, 
 $\underset{\varepsilon \to 0}{\lim}\; \frac{\alpha(\varepsilon)}{\beta(\varepsilon)} = 0$. 
We will write $\rho = \rho(\varepsilon), \delta = \delta(\varepsilon), \alpha = \alpha(\varepsilon), \beta = \beta(\varepsilon)$. For $\delta$ small enough, set
\begin{equation*}
\Tilde{\delta} := \dfrac{\delta}{1 - \dfrac{\alpha}{\beta}\delta}, \quad
\Tilde{\rho} := \rho\big(1 - \dfrac{\alpha}{\beta} \delta\big).
\end{equation*}
Clearly $\Tilde{\delta}, \Tilde{\rho}\to 0$. Furthermore,
\begin{equation*}
\dfrac{\Tilde{\delta}}{\Tilde{\rho} - \varepsilon} = \dfrac{\delta/(\rho-\varepsilon)}{\bigg(1-\dfrac{\alpha}{\beta}\dfrac{\delta\rho}{\rho-\varepsilon}\bigg)\big(1-\dfrac{\alpha}{\beta}\delta\big)},
\end{equation*}
so if $\underset{\varepsilon \to 0}{\lim}\; \frac{\delta}{\rho - \varepsilon} = m$, then $\underset{\varepsilon \to 0}{\lim}\; \frac{\Tilde{\delta}}{\Tilde{\rho} - \varepsilon} = m$. 
Consider the following biholomorphism (a small perturbation of the identity map):
\begin{equation*}
\Phi_{1, \varepsilon} :  \mathbb{C}^2 \longrightarrow \mathbb{C}^2, \; z \mapsto \Phi_{1, \varepsilon}(z) = \bigg(z_1 - \dfrac{\alpha}{\beta} z_2, z_2\bigg),
\end{equation*}
Then $\Phi_{1, \varepsilon}\big(S'_\varepsilon\big) = S_{1, \varepsilon} = \big\{(0,0), (\varepsilon, 0), (\Tilde{\rho}, \Tilde{\delta}\Tilde{\rho}), (0, \beta)\big\}$. 
Since $v_{23} = [0:1]$ and $v_{13} = [0:1]$,  $|\alpha - \varepsilon| \ll \frac{1}{2}|\beta|$ et $|\alpha| \ll \frac{1}{2}|\beta|$. So $|\varepsilon| \leqslant |\alpha - \varepsilon| + |\alpha| \ll |\beta|$. 

The proof is concluded with the following result. 
Notice that this limit ideal in case (ii.2) is deduced from $\mathcal I_0$ by exchanging the
coordinates $z_1$ and $z_2$, so is again equivalent to it by a linear invertible map.

\begin{prop}\label{pro01_15112012} 
Let $S_\varepsilon = \{(0,0), (\varepsilon,0), (\rho, \delta\rho), (0, \beta)\}$ 
tend to $(0,0)$ as $\varepsilon\to 0$, with $\rho := \rho(\varepsilon), \delta := \delta(\varepsilon), \beta := \beta(\varepsilon)$ and $0 < |\rho| \leqslant \frac{1}{2}|\varepsilon|$, $|\varepsilon| \ll |\beta|$. Then

(i) \; If $\underset{\varepsilon \to 0}{\lim}\; \dfrac{\delta}{\rho - \varepsilon} = m \not= \infty$, 
$
\underset{\varepsilon \to 0}{\lim}\; \mathcal{I}\big(S_\varepsilon\big) = \mathcal{I}_0 := \left\langle z_1z_2, z_2^2, z_1^3\right\rangle.$

ii) \; If $\underset{\varepsilon \to 0}{\lim}\; \dfrac{\delta}{\varepsilon} = \infty$, 
we have two cases:
\begin{itemize}
\item[ii.1)]\;\;If $\underset{\varepsilon \to 0}{\lim}\; \dfrac{\rho - \varepsilon}{\delta\beta} = k \notin \{0, \infty\}$, then
$
\underset{\varepsilon \to 0}{\lim}\; \mathcal{I}\big(S_\varepsilon\big) = \mathcal{J}_0 := \left\langle z_1z_2, z_1^2 + k z_2^2, z_1^3\right\rangle.
$
\item[ii.2)] \;\;If $\underset{\varepsilon \to 0}{\lim}\; \dfrac{\rho - \varepsilon}{\delta\beta} = k \in \{0, \infty\}$, then $\lim_\eps \mathcal{I}\big(S_\varepsilon\big)=\mathcal{I}_0$, if $k = \infty$ and $\mathcal{I}_1 := \left\langle z_1z_2, z_1^2, z_2^3 \right\rangle$, if $k = 0$.
\end{itemize}
\end{prop}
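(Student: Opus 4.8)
The plan is to treat all four cases by the mechanism used at the end of the proof of Theorem \ref{theo_02_chap_04}: in each case I will exhibit an explicit ideal $\mathcal I_\ast$ (one of $\mathcal I_0$, $\mathcal J_0$, $\mathcal I_1$), of length $4=N$, show that $\mathcal I_\ast\subset\liminf_\eps\mathcal I(S_\varepsilon)$ by producing each of its generators as a locally uniform limit of functions lying in $\mathcal I(S_\varepsilon)$, and then conclude $\lim_\eps\mathcal I(S_\varepsilon)=\mathcal I_\ast$ by Theorem \ref{limsupinf}(ii). Write $b_1=(0,0)$, $b_2=(\varepsilon,0)$, $b_3=(\rho,\delta\rho)$, $b_4=(0,\beta)$. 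A preliminary observation, used repeatedly, is that $0<|\rho|\le\frac12|\varepsilon|$ and $|\varepsilon|\ll|\beta|$ give $|\rho-\varepsilon|\asymp|\varepsilon|$ and $\rho/\beta\to0$, whence (since $\delta\to0$) also $\delta\rho/\beta\to0$; these let me replace $\beta-\delta\rho$ by $\beta$ in every limit below.

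First I would produce the generators that are common to the limit ideals and do not see the finer case distinction. The three distinct $z_1$-coordinates of the points are $0,\varepsilon,\rho$, so $z_1(z_1-\varepsilon)(z_1-\rho)\in\mathcal I(S_\varepsilon)$ and tends to $z_1^3$; likewise the distinct $z_2$-coordinates are $0,\delta\rho,\beta$, so $z_2(z_2-\delta\rho)(z_2-\beta)\in\mathcal I(S_\varepsilon)$ tends to $z_2^3$. Thus $z_1^3,z_2^3\in\liminf_\eps\mathcal I(S_\varepsilon)$. For $z_1z_2$ I would take the product $l_{12}^\varepsilon l_{34}^\varepsilon$ of the normalized line through $b_1,b_2$ and the line through $b_3,b_4$: here $l_{12}^\varepsilon=z_2$ and $l_{34}^\varepsilon\to z_1$, so this product lies in $\mathcal I(S_\varepsilon)$ and tends to $z_1z_2$. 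Hence $z_1z_2\in\liminf_\eps\mathcal I(S_\varepsilon)$ in every case.

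The heart of the matter is the remaining quadratic generator, which I would extract from the two-dimensional space of conics lying in $\mathcal I(S_\varepsilon)$. Writing a conic as $\sum a_{ij}z_1^iz_2^j$, vanishing at $b_1,b_2,b_4$ forces $a_{00}=0$, $a_{10}=-a_{20}\varepsilon$, $a_{01}=-a_{02}\beta$, and vanishing at $b_3$ gives the single relation
\[
a_{20}(\rho-\varepsilon)+a_{11}\,\delta\rho+a_{02}\,\delta(\delta\rho-\beta)=0.
\]
The slice $a_{11}=0$ isolates the forms built from $z_1^2$ and $z_2^2$: the conic is then $a_{20}(z_1^2-\varepsilon z_1)+a_{02}(z_2^2-\beta z_2)$, whose lower-order terms vanish in the limit, and the relation reads $a_{20}/a_{02}=\delta(\beta-\delta\rho)/(\rho-\varepsilon)\sim \delta\beta/(\rho-\varepsilon)$. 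Each hypothesis now pins down the limit of this ratio: in case (i), $\delta/(\rho-\varepsilon)\to m\ne\infty$ forces $\delta\beta/(\rho-\varepsilon)\to0$ (as $\beta\to0$), so after normalizing $a_{02}=1$ the conic tends to $z_2^2$; in case (ii.1), $(\rho-\varepsilon)/(\delta\beta)\to k\notin\{0,\infty\}$ gives $a_{20}/a_{02}\to 1/k$, so the conic tends to a nonzero multiple of $z_1^2+kz_2^2$; in case (ii.2) the ratio tends to $0$ (when $k=\infty$) or to $\infty$ (when $k=0$), giving $z_2^2$ or $z_1^2$ respectively. In each case the quadratic limit together with $z_1z_2,z_1^3,z_2^3$ generates exactly $\mathcal I_\ast$, and since $\ell(\mathcal I_\ast)=4$ with $\mathcal I_\ast\subset\liminf_\eps\mathcal I(S_\varepsilon)$, Theorem \ref{limsupinf} yields $\lim_\eps\mathcal I(S_\varepsilon)=\mathcal I_\ast$.

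The only real work, and the place where slips are easy, is the conic computation: correctly deriving the relation among $a_{20},a_{11},a_{02}$ from vanishing at the four points, and then translating the three normalizations of the hypotheses — phrased through $\delta/(\rho-\varepsilon)$, $\delta/\varepsilon$ and $(\rho-\varepsilon)/(\delta\beta)$ — into the single ratio $a_{20}:a_{02}$ governing the limiting quadratic form. I expect the main obstacle to be bookkeeping the normalizations so that the surviving coefficient stays bounded (so the limit is a genuine nonzero conic) while the discarded terms $\varepsilon z_1$, $\beta z_2$ and the factor $\delta\rho$ in $\beta-\delta\rho$ are negligible; this is precisely what the size conditions $|\rho|\le\frac12|\varepsilon|\ll|\beta|$ and $\delta\to0$ guarantee. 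Once the limiting quadratic is identified, the length check for $\mathcal I_0,\mathcal J_0,\mathcal I_1$ and the appeal to Theorem \ref{limsupinf} are routine.
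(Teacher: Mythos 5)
Your proposal is correct and follows essentially the same route as the paper: the generators $z_1z_2$, $z_1^3$, $z_2^3$ are obtained from the same products of lines and coordinate factors, and your $a_{11}=0$ slice of the conic system (with the relation $a_{20}(\rho-\varepsilon)+a_{11}\delta\rho+a_{02}\delta(\delta\rho-\beta)=0$) reproduces exactly the paper's explicit polynomials $Q_\varepsilon$, $P_\varepsilon$, $R_\varepsilon$ under the two normalizations $a_{02}=1$ and $a_{20}=1$, after which the conclusion via Theorem \ref{limsupinf} is identical. The only difference is presentational: you derive the distinguished conic from the general linear system rather than writing it down ad hoc, which is a slightly more systematic way of organizing the same computation.
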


\begin{proof}  Since $|\varepsilon| \ll |\beta|$,
\begin{equation*}
z_1z_2 = \underset{\varepsilon \to 0}{\lim}\; \big(z_2 - \rho z_1\big)\big[z_1 + \dfrac{\varepsilon}{\beta}z_2 - \varepsilon\big] \in \liminf_\eps \mathcal{I_\eps}, \mbox{ and}\\
\end{equation*}
\begin{equation*}
z_1^3 = \underset{\varepsilon \to 0}{\lim}\; z_1(z_1 - \rho)(z_1 - \varepsilon) \in \liminf_\eps \mathcal{I_\eps}.
\end{equation*}
Thus $\left\langle z_1z_2, z_1^3 \right\rangle \subset \liminf_\eps \mathcal{I_\eps}$. 

Now we need to look at various cases separately.
\vskip.3cm
i) Since $\underset{\varepsilon \to 0}{\lim}\; \dfrac{\delta}{\rho - \varepsilon} = m \not= \infty$ 
and the polynomial
\begin{equation*}
Q_\varepsilon(z) := \dfrac{\delta\varepsilon}{\rho - \varepsilon}(\delta\rho - \beta)z_1 - \beta z_2 - \dfrac{\delta}{\rho - \varepsilon}(\delta\rho - \beta) z_1^2 + z_2^2 \in \mathcal{I}\big(S_\varepsilon\big),
\end{equation*}
we obtain $z_2^2 = \underset{\varepsilon \to 0}{\lim}\;Q_\varepsilon(z) \in \liminf_\eps \mathcal{I_\eps}$. So 
$\mathcal{I}_0 := \left\langle z_1z_2, z_2^2, z_1^3\right\rangle \subset \liminf_\eps \mathcal{I_\eps}$. 
Since $\ell(\mathcal{I}_0) = 4$, applying Theorem \ref{limsupinf} $\underset{\varepsilon \to 0}{\lim}\; \mathcal{I}\big(S_\varepsilon\big) = \mathcal{I}_0$.

ii)  Since $0 < |\rho| \leqslant \frac{1}{2}|\varepsilon|$,
$\frac{|\varepsilon|}{2} \leqslant |\rho - \varepsilon| \leqslant |\varepsilon|$,
so if $\underset{\varepsilon \to 0}{\lim}\; \dfrac{\delta}{\varepsilon} = \infty$,
 $\underset{\varepsilon \to 0}{\lim}\;\dfrac{\rho - \varepsilon}{\delta} = \underset{\varepsilon \to 0}{\lim}\;\dfrac{\rho - \varepsilon}{\varepsilon}\dfrac{\varepsilon}{\delta} = 0$. 
 
 We consider two subcases:
 
ii.1) Suppose $\underset{\varepsilon \to 0}{\lim}\; \dfrac{\rho - \varepsilon}{\delta\beta} = k \notin \{0, \infty\}$. Consider the polynomial
\begin{equation}\label{eq_15112011_03}
P_\varepsilon(z) := - \varepsilon z_1 + \dfrac{\rho - \varepsilon}{\delta\beta} \dfrac{\beta}{\dfrac{\delta\rho}{\beta} - 1} z_2 + z_1^2 - \dfrac{\rho - \varepsilon}{\delta\beta} \dfrac{1}{\dfrac{\delta\rho}{\beta} - 1}z_2^2.
\end{equation}
We can check that $P_\varepsilon(z) \in \mathcal{I}\big(S_\varepsilon\big)$. 
Since $|\delta\rho| \ll |\rho| \leqslant \frac{1}{2}|\varepsilon| \ll |\beta|$, il vient $\frac{\delta\rho}{\beta} \to 0$. So
\begin{equation*}
z_1^2 + kz_2^2 = \underset{\varepsilon \to 0}{\lim}\; P_\varepsilon(z) \in
\liminf_\eps \mathcal{I_\eps}.
\end{equation*}
Thus $\mathcal{J}_0 := \left\langle z_1z_2, z_1^2 + kz_2^2, z_1^3\right\rangle 
\subset \liminf_\eps \mathcal{I_\eps}$. 
But the class $[z_1^2] = [z_1^2 + kz_2^2] - k[z_2^2] = - k[z_2^2] \in \mathcal{O}(\Omega) / \mathcal{J}_0$, thus 
$\mathcal{O}(\Omega) / \mathcal{J}_0 = Span \{ [1], [z_1], [z_2], [z_2^2] \}$
and $\ell(\mathcal{J}_0) = 4$. Using Theorem \ref{limsupinf}, we conclude $\underset{\varepsilon \to 0}{\lim}\; \mathcal{I}\big(S_\varepsilon\big) = \mathcal{J}_0$.

ii.2) Suppose $\underset{\varepsilon \to 0}{\lim}\; \dfrac{\rho - \varepsilon}{\delta\beta} = k \in \{0, \infty\}$. Analogously to \eqref{eq_15112011_03}, consider the polynomial
\begin{equation*}
R_\varepsilon(z) := \dfrac{\delta\beta}{\varepsilon}\big(\dfrac{\delta\rho}{\beta} - 1\big)\dfrac{\varepsilon}{\rho - \varepsilon}\varepsilon z_1 - \beta z_2 - \dfrac{\delta\beta}{\varepsilon}\big(\dfrac{\delta\rho}{\beta} - 1\big)\dfrac{\varepsilon}{\rho - \varepsilon} z_1^2 + z_2^2.
\end{equation*}
We can check that $R_\varepsilon(z) \in \mathcal{I}\big(S_\varepsilon\big)$. If $k = \infty$, 
then $|\delta\beta| \ll |\rho - \varepsilon| \ll |\delta|$, and
\begin{equation*}
\underset{\varepsilon \to 0}{\lim}\; \dfrac{\delta\beta}{\varepsilon} = \underset{\varepsilon \to 0}{\lim}\; \dfrac{\delta\beta}{\rho - \varepsilon}\dfrac{\rho - \varepsilon}{\varepsilon} = 0.
\end{equation*}
Thus $z_2^2 = \underset{\varepsilon \to 0}{\lim}\; P_\varepsilon(z) \in \liminf_\eps \mathcal{I}_\eps$.
Then $\mathcal{I}_0 \subset \liminf_\eps \mathcal{I}_\eps$. Since $\ell(\mathcal{I}_0) = 4$, using Theorem \ref{limsupinf}, we conclude $\underset{\varepsilon \to 0}{\lim}\; \mathcal{I}\big(S_\varepsilon\big) = \mathcal{I}_0$.

Finally, if $k = 0$, $|\rho - \varepsilon| \ll |\delta\beta| \ll |\delta|$. From \eqref{eq_15112011_03}
we deduce
$z_1^2 = \underset{\varepsilon \to 0}{\lim}\; P_\varepsilon(z) \in \liminf_\eps \mathcal{I}_\eps.
$
In addition,
$ z_2^3 = \underset{\varepsilon \to 0}{\lim}\; z_2(z_2 - \delta\rho)(z_2 - \beta) \in \liminf_\eps \mathcal{I}_\eps$.

Therefore $\mathcal{I}_1 := \left\langle z_1z_2, z_1^2, z_2^3 \right\rangle \subset \liminf_\eps \mathcal{I}_\eps$. We conclude as before.
\end{proof}

{}

\end{document}